\newcommand{\M}{\mathcal{M}}
\newcommand{\B}{\mathcal{B}}
\newcommand{\Y}{\mathbf{Y}}
\newcommand{\fl}{\mbox{flat}}
\renewcommand{\k}{\mathcal{K}}
\newcommand{\A}{\mathcal{A}}
\newcommand{\rainbow}{~\mbox{\scriptsize{rainbow}}}
\newcommand{\f}{{f_{r}}}
\subjclass[2010]{05C80, 60F05}
\title{A Local Limit Theorem for Cliques in $G(n,p)$}
\author{Ross Berkowitz}
\date{} 
\keywords{Local limit theorem, random graph, characteristic function}
\begin{document}
\maketitle
\begin{abstract}
We prove a local limit theorem the number of $r$-cliques in $G(n,p)$ for $p\in(0,1)$ and $r\ge 3$ fixed constants.  Our bounds hold in both the $\ell^\infty$ and $\ell^1$ metric.  The main work of the paper is an estimate for the characteristic function of this random variable.  This is accomplished by introducing a new technique for bounding the characteristic 
function of constant degree polynomials in independent Bernoulli random variables, combined with a decoupling argument.
\end{abstract}

\section{Introduction}
In 1960 Erd\H{o}s and R\'enyi introduced the study of $G(n,p)$, the random graph on $n$ vertices where each edge is included independently at random with probability $p$.  In  \cite{ErdosRenyi} they showed, among other results, that the number of cliques of size $r$ in $G(n,p)$ is concentrated about its mean using Chebyshev's inqeuality.  Since then the Erd\H{o}s-R\'enyi random graph has become an object of much study, and many nice results have been obtained concerning the following natural question:
\begin{question}
Let $H$ be some fixed graph.  What is the distribution of the number of copies of $H$ as a random variable?
\end{question}

In this paper, we will consider this question for the regime where $H$ is the $r$-clique, $K_r$, and $p\in (0,1)$ is a fixed constant.
Let $\f$ denote the random variable counting the number of $r$-cliques in $G(n,p)$ and set $\mu=\E[f_r]$ and $\sigma^2=Var(f_r)$. 


In the 1980's there were several papers studying which subgraph counts obeyed a central limit theorem  (see \cite{Kar83, Kar84, NW88, Ruc88}, for example).   By that time central limit theorems stating that $\f$ converged in distribution to the Gaussian were known.  That is for any real numbers $a<b$ 
\begin{equation}\label{clteq}
\Pr\left[f_r\in[\mu+a\sigma, \mu+b\sigma]\right]=\frac{1}{\sqrt{2\pi}}\int_{a}^b e^{-t^2/2}dt+o(1)
\end{equation}
%

Note that the central limit theorem in equation \ref{clteq}
bounds the probability that $\f$ lies in an interval of length $O(\sigma)$.  In this paper we will show that the distribution of $\f$ is \textit{pointwise} close to a discrete Gaussian.   Our main result is the following local limit theorem:

\begin{thm} \label{Sup Main}
Fix any $0<\tau<\min(1/12,1/2r)$.
For any $m\in \mathbb{N}$ we have that
$$\Pr[f_r=m]=\frac{1}{\sqrt{2\pi}\sigma}e^{-\frac{\left(m-\mu\right)^2}{2\sigma^2}}+O\left(\frac{1}{\sigma n^{\frac12-\tau}}\right)$$
\end{thm}

Because of the quantitative error bound, we are also able to extend this to the following $\ell^1$, or statistical distance bound between 
$\f$ and the discrete Gaussian.
\begin{thm}\label{L1 Main}
$$\sum_{m\in \mathbb{N}}\left|\Pr(\f=m)-\frac{1}{\sqrt{2\pi}\sigma}\exp\left(-\frac{\left(m-\mu\right)^2}{2\sigma^2}\right)\right|=O(n^{-\frac12+\tau})$$
\end{thm}

\subsection{Related Results}
Our methods depend on examining a particular orthogonal basis of the space of functions on $G(n,p)$.  For many other applications of such orthogonal decompositions to counting problems on $G(n,p)$, see \cite{MR1219708}.
If $p$ were allowed to become arbitrarily small as $n$ grows, then $\f$ may be shown in some cases to resemble a Poisson random variable.  For example, if the edge probability $p\sim cn$ for some constant $c$, then Erd\H{o}s and Renyi \cite{ErdosRenyi} showed that the number of triangles in $G(n,p)$ converges to a Poisson distribution.  This result was a local limit theorem, as it estimated the pointwise probabilities $\Pr[f_3=k]$ for $k$ constant.  Further, R\"{o}llin and Ross \cite{Ross} showed a local limit theorem when $p\sim cn^{\alpha}$ for $\alpha\in [-1,-\frac12]$.   In this regime they showed that the triangle counting distribution converges to a translated Poisson distribution (which is in turn close to a discrete Gaussian) in both the $\ell^\infty$ and $\ell^1$ metrics.
In 2014, Gilmer and Kopparty \cite{JustinTriangles} proved a local limit theorem for triangle counts in $G(n,p)$  in the regime where $p$ is a fixed constant.  Their main theorem was the following pointwise bound:
$$\Pr[f_3=m]=\frac{1}{\sqrt{2\pi}\sigma}\exp\left(-\frac{\left(m-\mu\right)^2}{2\sigma_n^2}\right)\pm o(n^{-2})$$
The proof in \cite{JustinTriangles} proceeded by using the characteristic function.  The main step there was to show that $|\varphi(t)-\varphi_{f_3}(t)|$ is small for $t\in [-\pi \sigma_n,~\pi\sigma_n]$, where $\varphi$ represents the characteristic function of the standard normal distribution, and $\varphi_{f_3}$ represents the characteristic function the triangle counting function $f_3$.
\cite{triangles} extended this result by improving the error bound and obtained a bound on the statistical distance between $f_3$ and the discrete Gaussian as well.  

\subsection{High Level Overview of Techniques}

  The central technique in this paper is to examine the characteristic function $\varphi_{\k}(t)$, where $\k=(f_r-\mu)/\sigma$ is the
  mean 0 variance 1 normalization of $f_r$.  The main calculation is showing that 
  $$\int_{t=-\pi\sigma}^{\pi\sigma} \left|\varphi_\k(t)-\varphi(t)\right|dt=O(n^{-\frac12+\tau})$$
  where $\varphi(t):=e^{-t^2/2}$ is the characteristic function of the standard unit normal random variable.
  The local limit theorem then follows from Fourier inversion for lattices.
  However, bounding the characteristic function of sums of dependent random variables is a tricky problem and several new ideas were needed.
  
  \subsubsection{Estimating $\varphi_\k(t)$ for small $t$}
  First, building on the method in our earlier work \cite{triangles}, we rewrite our random variable $f_r$ as a polynomial, not in the natural 0,1 indicator random variables $x_e$, but instead in the orthogonal $p$-biased Fourier basis $\chi_e$.  This
  slight change of basis immediately simplifies the proof of the central limit theorem and lays bare the intuition that the number of triangles in $G(n,p)$
  is almost completley driven by the \textit{number of edges} present in the graph.  In fact, once we switch from $x_e$ to $\chi_e$ and normalize to unit variance, the degree
  ${r\choose 2}$ polynomial $\k=(\f-\mu)/\sigma$ becomes $1-o(1)$ close to a degree 1 polynomial.  This turns out to be sufficient to prove that $
  \varphi_\k(t)$ is close to a Gaussian for $t$ small.  Because $|e^{itx}-e^{itx'}|\le |x-x'|$ for any $x,x'\in \mathbb{R}$ we can simply estimate $\varphi_\k(t)$ by noting that 
  \begin{equation}\label{introsmallteq}
 | \E[e^{it\k}]-\E[e^{it\k^{=1}}]|\le \E[|t\k^{>1}|]
  \end{equation}
  
  Because $\k^{=1}$ is a sum of i.i.d.\ Bernoulli random variables, the fact that its characteristic function is close to Gaussian is the well
  known Berry-Esseen bound.  Meanwhile, we will show that as noted above, $\k$ is concentrated on degree 1 terms and so $\k^{>1}$ is small.
  
  \subsubsection{Bounding $\varphi_\k(t)$ for slightly larger $t$}
  The bound in equation \ref{introsmallteq} is useful, but crude, and it degrades in usefulness rapidly as $t$ grows.    Let $X=\k^{=1}=\sum_{e} \hat\k(e)\chi_e$ and $Y=\k^{>1}$ (recall that we will expect $Y$ to be small).  Then $\k=X+Y$ and we obtain a better approach by using Taylor's Theorem to rewrite the above estimate as 
    \begin{align*}\label{introsmallteq}
 | \E[e^{it\k}]-\E[e^{itX}]|=|\E[e^{itX}(e^{itY}-1)]|=\E\left[e^{itX}\sum_{j=1}^\ell \frac{(itY)^j}{j!}\right]+O\left(\E\left|e^{itX}(tY)^{\ell+1}\right|\right)
  \end{align*}
  
Assuming $tY$ is typically small and $\ell$ some large but fixed constant we will be able to show that $e^{itX}$ and $Y^j$ are nearly uncorrelated.
To this end we prove a result which, with some omitted terminology, says:
  
  \begin{thm}\label{mainchf}
  Let $Z=X+Y$, where $X=\sum_{i=1}^n X_i$ is an i.i.d.\ sum of Bernoulli random variables.  Assume that $\hat{\|}Y\hat{\|}_1=O(n)$,
  and $\varphi_{X}(t)=O(\exp(-n^{\Omega(1)}))$.
  Then for any fixed $\ell$
  $$|\varphi_{Z}(t)-\varphi_X(t)|=O\left(\left|t\cdot\|Y\|_2\right|^\ell\right)$$
  \end{thm}

  \subsubsection{Bounding $\varphi_\k(t)$ for $t$ even larger still}
  Several substantial barriers present themselves for adapting the above arguments to bounding $\varphi_\k(t)$ for $t\ge O(n)$.
  First, in order to apply Theorem \ref{mainchf} profitably, there was the requirement that we consider a random variable of the form $X+Y$ where
  $X=\sum X_i$ is a sum of i.i.d.\ random variables, and $t\|Y\|_2$ is small.   This is a source of trouble as once $t>\|Y\|_2^{-1}$ our bound will be worthless.  Second, and equally troubling, the characteristic function the sum of ${n\choose 2}$ i.i.d.\ independent Bernoullis, $\sum \frac{1}{n}\chi_e$  is only small for $t=O(n)$, but we require our characteristic function to be small for $t\le \sigma=O(n^{r-1})$.  It should be noted that this
  barrier is not artificial.  Some subgraph counts, such as the number of disjoint pairs of edges, do obey a central limit theorem by the proofs above,
  but not a local limit theorem.  In these cases the problem occurs because of the breakdown of the characteristic function at 
  $t=O(n)$  Again, this is not accidental, but a consequence of the fact that the number of pairs of disjoint edges is always a square, and therefore
  almost on a lattice of step size $O(n)$.
  
  The main idea is, very roughly speaking, that the higher order terms of the polynomial $\k$ are responsible for controlling the size of
  $\varphi_{\k}(t)$ for $t$ large.  In particular, when $t=n$, it is most profitable to look at $\k^{=2}$, the degree 2 polynomial rather than
  the $\k^{=1}$ as in the previous arguments.  However there is still trouble:  what to do with the larger $\k^{=1}$ term?  The answer
  lies in a decoupling trick which allows us to ``clear out'' the lower order terms.  We illustrate with an example extracted from \cite{JustinTriangles}.
  \begin{example}\label{cs example}
  Let $f_3$ be the triangle counting random variable.  Partition the vertex set $[n]=U_0\cup U_1$ with
  $|U_0|=|U_1|=n/2$.  Let $B_0$ denote the edges internal to $U_0$, and $B_1$ be all other edges.  Let $X\in \{0,1\}^{B_0}$ and $Y\in \{0,1\}^{B_1}$
  be random vectors drawn according to the probability distribution $G(n,p)$.  Finally let $Y_0,Y_1$ denote independently drawn copies of $Y$.  Finally
  rewrite $f_3=A(X)+B(Y)+C(X,Y)$, isolating the monomials in $f_3$ which only depend on either $X$ or $Y$.  Then we can bound the characteristic function of $f_3$ by doing the following decoupling trick
\begin{align*}
|\varphi_{f_3}(t)|^2&=|\E_{X,Y}[e^{itf_3}]|^2=\left| \E_X e^{it A(X)} \E_Y e^{it(B(Y)+C(X,Y))}\right|^2\le \E_X\left| \E_Y e^{it(B+C)}\right|^2\\
&=\E_X\left(\E_{Y_1}e^{it(B+C)}\overline{\E_{Y_2}e^{it(B+C)}}\right)=\E_{Y_1,Y_2} e^{it(B(Y_0)-B(Y_1))}\E_Xe^{it(C(X,Y_0)-C(X,Y_1))}\\
&\le\E_{Y_0,Y_1}\left |\E_X e^{it(C(X,Y_0)-C(X,Y_1))}\right|
\end{align*}

In the last line above,  the terms $A$ and $B$, which depended on only one of $X$ or $Y$, have vanished.  Additionally, in the inner expectation we consider $C(X,Y_0)-C(X,Y_1)$ as a polynomial in $X$ for some random but fixed choice of $Y_0,Y_1$.  
A moment's reflection will reveal that the only monomials in $C(X,Y)$ correspond to triangles with two vertices in $U_0$ and one vertex in $U_1$.  Therefore
each triangle represented in $C(X,Y)$ has two edges in $B_1$, but only one in $B_0$.  So $C(X,Y)$ is only a polynomial of
degree 1 in $X$. 

Then we can use standard methods to analyze $\E[e^{it[C(X,Y_0)-C(X,Y_1)]}]$, because it is a sum of independent Bernoulli random variables.
One last wrinkle in the above that should be mentioned is that the linear function $C(X,Y_0)-C(X,Y_1)$ depends on the samples $Y_0,Y_1$ of edges
in $B_1$.  But after some work, we can show that with overwhelming probability (in the sampling of $Y_0,Y_1$), we will have that
$\E[e^{it[C(X,Y_0)-C(X,Y_1)]}]$ is small.

\end{example}

Section \ref{decoupling section} develops a version of this decoupling trick for higher degree polynomials.  In order to eliminate all monomials of degree
at most $k-1$, we will require $k+1$ partitions of our vertices and $2k$ independent samples.   One additional difference will be that, upon performing 
this decoupling trick, we will not always be left with a linear function but rather a polynomial which is highly concentrated on degree 1 terms.
But combining some careful analysis with Theorem \ref{mainchf}, we will be able to obtain our bounds on $\varphi_{\k}(t)$ in a similar manner
to the above example.

\subsection{Organization of this Paper}
In Section \ref{prelimsection} we set up our notation and introduce some facts which will be necessary for the later sections.  Section \ref{mainsection} contains the statements and proofs of our main results, modulo the main technical lemmas.  In Section \ref{mainchf section} we prove Theorem \ref{mainchf}, which is our main technical tool for bounding characteristic 
functions of constant degree polynomials in this paper.  In Section \ref{decoupling section} we prove our main decoupling Lemma.  Section \ref{KrProp}
contains our analysis of the properites of the clique counting random variables $f_r$ and $\k$.
Finally, Sections \ref{smallt section} through \ref{larget section} are dedicated to applying the afforementioned Lemma to bounding the characteristic
function of $\k$ in different regimes depending on $|t|$.

%

\section{Preliminaries and Notation} \label{prelimsection}
\subsection{Definition of our random variables $f_r$ and $\k$}
Throughout we will always be working with the probability space $G(n,p)$.  We will assume a vertex set of $[n]=\{1,2,\ldots,n\}$, and 
a set of indicator random variables, $x_e$ for each edge $e\in {[n]\choose 2}$.  $x_e$ will be 1 if edge $e$ is present in our sampled graph and 0 otherwise.  All edges will be present independently at random with probability $p$.  We will use $\lambda$ to denote $\min(p,1-p)$.
The graph $K_r$ is the clique on $r$ vertices, that is it has $r$ vertices and contains all edges between them.
Let $f_r$ denote the random variable counting the number of copies of $K_r$ in our random graph.  We express this as
$$f_r=\sum_{S\equiv K_r} x^S$$
where the sum is taken over all  ${n\choose r}$  sets of edges $S\subset {[n]\choose 2}$ which are isomorphic to the $r$-clique $K_r$, and $x^S:=\prod_{i\in S} x_i$.  We will also
frequently refer to the mean and standard deviation of $f_r$.  Throught the paper we will use $\mu$ and $\sigma$ to denote
$$\mu:=\E_{G\sim G(n,p)} f_r(G)=p^{r\choose 2}{n\choose r}\qquad\qquad \sigma:=\sqrt{\E_{G\sim G(n,p)}[f_r^2-\mu^2]}$$
Note that $\mu$ and $\sigma$ depend on $n$, as well as the fixed parameters $r$ and $p$.  Throughout it will be more convenient to work with the normalized copy of $f_r$, which we label $\k$
$$\k:=\k_r(G):=\frac{f_r-\mu}{\sigma}$$

\subsection{Parameters and asymptotics}
We will have need of a fixed, but arbitrarily small constant labeled $\tau \in (0,1/2r)$.  $\tau$ will be the same constant throughout the entire paper.  Additionally, we will always assume that $r\ge 3$, and $p\in (0,1)$ are fixed constants which do not depend on $n$.  Our results
will then apply in the asymptotic setting as $n\to \infty$.  Additionally, all asymptotic notation of the form $O(\cdot),~o(\cdot),$ or $\Omega(\cdot)$ will view
$r,p,\tau$ as fixed constants.
\subsection{$p$-biased basis}
A crucial tool throughout this paper will be the $p$-biased Fourier basis.  Rather than working with the indicator random variable $x_e$ directly
we will instead apply the following linear transformation
\begin{define}[Fourier Basis]
$$\chi_e:=\chi_e(x_e)=\frac{x_e-p}{\sqrt{p(1-p)}}=\begin{cases}
-\sqrt{\frac{p}{1-p}}&\mbox{if }x=0\\
\sqrt{\frac{1-p}{p}}&\mbox{if }x=1
\end{cases}
$$
For $S\subset {[n]\choose 2}$ set $\chi_S=\prod_{e\in S}\chi_e$.  For $S=\varnothing$ we have $\chi_\varnothing\equiv 1$.
\end{define}
As the $x_e$ variables are independent $p$-biased Bernoulli random variables, for any $S\neq T\subset {[n]\choose 2}$ we have $\E[\chi_S^2]=1$ and $\E[\chi_S\chi_T]=0$.  Therefore
the set of functions $\{\chi_S~|~S\subset {[n]\choose 2}\}$ is an orthonormal basis for the space of functions on $G(n,p)$.

We will also use the notation of the Fourier transform.  Since the $\chi_S$ form an orthonormal basis, for any function 
$f:\{0,1\}^{[n]\choose 2}\to \mathbb{R}$ we can choose coefficients $\hat{f}(S)$ so that
$$f= \sum_{S\subset {[n]\choose 2}} \hat{f}(S)\chi_S$$
The coefficients $\hat{f}(S)$ are called the Fourier Coefficients of $f$, and the function $\hat{f}:2^{[n]\choose 2}\to \mathbb{R}$ is called
the Fourier transform of $f$.  Morevoer we can compute these coefficients by noting that $\hat{f}(S)=\E[f\chi_S]$.  The degree of a monomial $\chi_S$ is $|S|$, and for an arbitrary function $f$, we say that it has degree equal the degree
of the largest monomial in its Fourier expansion.  That is
$\deg(f)=\max_{\hat{f}(S)\neq \varnothing}|S|$.

Additionally, it will be helpful to refer only to terms of $f$ of a certain degree.  For any $k\in \mathbb{N}$ let
\begin{align*}
f^{=k}&:=\sum_{|S|=k}\hat{f}(S)\chi_S\\
f^{>k}&:=\sum_{|S|>k}\hat{f}(S)\chi_S
\end{align*}

The $2$-norm of a function $\|f\|_2$ and the spectral 1-norm $\hat{\|}f\hat{\|}_1$ are defined to be
\begin{align*}
\|f\|_2&:=\sqrt{\E[f^2]}\\
\hat{\|}f\hat{\|}_1&:=\sum_S |\hat{f}(S)|
\end{align*}
Another fact which we will need throughout this paper is Parseval's Theorem, which allows us to easily compute the variance of a function 
in terms of its Fourier transform
\begin{thm}(Parseval's Theorem) \label{parseval}
$$\E[f^2]=\sum_{S\subset {[n]\choose 2}} \hat{f}(S)^2$$
Also, it therefore holds that
$$Var(f)=\E[f^2]-\E[f]^2=\E[f^2]-\hat{f}(\varnothing)^2=\sum_{S\neq \varnothing} \hat{f}(S)^2$$
\end{thm}

\subsection{Function Restrictions}\label{restrictions subsection}
Let $f:\{0,1\}^{[n]\choose 2}\to \mathbb{R}$  be an arbitrary function and $H\subset {[n]\choose 2}$.  
Given a setting $\beta\in \{0,1\}^{H^c}$ of the edges not in $H$ we define the restricted function $f_\beta:\{0,1\}^H\to \mathbb{R}$ by
$$f_\beta(\alpha):=f(\alpha,\beta)$$
Whenever we use this restriction notation the choice of $H$ will be made explicit beforehand, but not referenced in the notation for the sake of compactness.
For any $S\subset H$ we may express the Fourier coefficients of $\widehat{f_\beta}(S)$ in terms of the coefficients of $f$ as follows:
$$\widehat{f_\beta}(S)=\sum_{T\subset H^c} \hat{f}(S\cup T) \chi_T(\beta)$$

Additionally, sometimes we wish to restrict by looking at some set of vertices, and only considering edges incident to those vertices.  To this end we will define the notion of the vertex support of some set of edges.
\begin{define}
Let $S\subset {[n]\choose 2}$  then $\supp(S)$ is defined to be the set of vertices incident to an edge in $S$.  If we interpret an edge as a set of two vertices, then $\supp(S)=\cup_{e\in S} e$.
\end{define}

\subsection{Characteristic Functions}
The bulk of this paper will be concerned with estimating the characteristic function of $\k$, our normalized copy of $f_r$, the $K_r$ counting random variable.  First we recall the definition of the characteristic function:
\begin{define}
Let $X$ be a random variable.  Then its characteristic function $\varphi_X:\mathbb{R}\to \mathbb{C}$ is defined to be
$$\varphi_X(t):=\E[e^{itX}]$$
\end{define}
These are very well studied objects, and they completely determine their associated random variable.  In particular, we will need the following inversion
formula which specifies the probability distribution of a latice valued random variable in terms of its characteristic function.
\begin{thm}[Fourier Inversion for Lattices] \label{inversion}
Let $X$ be a random variable supported on the lattice $\mathcal{L}:=b+h\mathbb{Z}$.  Let $\varphi_X(t):=\E[e^{itX}]$, the characteristic function of $X$.  Then for $x\in\mathcal{L}$
$$\mathbb{P}(X=x)=\frac{h}{2\pi}\int_{-\frac{\pi}{h}}^{\frac{\pi}{h}} e^{-itx}\varphi_X(t)dt$$
\end{thm}
For a proof, see Theorem 4 of chapter 15.3 in volume 2 of Feller \cite{Feller2}.
We will also need the following bounds on the characteristic function of the Bernoulli random variable.
\begin{lem}\label{bernoulli}
Let $Y$ be a random variable taking the value $1$ with probabillity $p$ and $-1$ with probability $1-p$.  For any $|t|<\frac\pi 2$,    $|\E[e^{itY}]|<1-\frac{8p(1-p) t^2}{\pi^2}$.  Consequently, it also follows that for $|t|<\pi$  we have $|\E[e^{itx_e}]|\le 1-\frac{4p(1-p)t^2}{\pi^2}$ and also for $|t|<\sqrt{p(1-p)}\pi$ we have $|\E[e^{it\chi_e}]|\le 1-\frac{2t^2}{\pi^2}$.
\end{lem}
\begin{proof}
For $Y$ we note that
\begin{align*}
|\E[e^{itY}]|^2&=|pe^{it}+(1-p)e^{-it}|^2=\cos^2(t)+(1-2p)^2\sin^2(t)=1-4p(1-p)\sin^2(t)\\
&\le 1-\frac{16p(1-p)t^2}{\pi^2}
\end{align*}
where the first inequality used the fact that $\sin(t)\ge 2t/\pi$ for $|t|\le \frac{\pi}{2}$.  The first claimed result now follows by noting that
$\sqrt{1-x}\le 1-\frac x 2$.

For the subsequent claim we note that  for any random variable $X$, and $a,b\in \mathbb{R}$
$$\E[e^{it(aX+b)}]=e^{itb}\E[e^{i(at)X}]$$
Since $x_e=(Y+1)/2$ and $\chi_e=\frac{x_e-p}{\sqrt{p(1-p)}}=\frac{Y+1-2p}{2\sqrt{p(1-p)}}$, the result follows.  For example:
$$|\E[e^{it\chi_e}]|=|\E[e^{i\frac{t}{2\sqrt{p(1-p)}}Y}]|\le 1-\frac{8p(1-p)t^2}{4p(1-p)\pi^2}$$
\end{proof}

\subsection{Concentration of low degree polynomials}

Throughout, we will lean heavily on the following hypercontractivity bounds, which roughly says that low degree polynomials in the
$\chi_e$  are reasonably well behaved in terms of moments and concentration.  The reference given here is for Ryan O'Donnell's textbook, but the results do not originate there and are due to a series of results by Bonami, Beckner, Borell and others.  See the notes in \cite{ODonnell} for further reference on the matter.

\begin{thm}[\cite{ODonnell} Theorem 10.24]\label{concentration hypercontractivity}
If $f$ has degree at most $d$ then for any $t\ge (2e/\lambda)^{d/2}$ (recall $\lambda=\min(p,1-p)$),
$$\Pr\left[|f(X)|\ge t\|f\|_2\right]\le \lambda^k\exp\left(-\frac{d}{2e}\lambda t^{2/d}\right)$$
\end{thm}

\begin{thm}[\cite{ODonnell} Theorem 10.21]\label{moment hypercontractivity}
If $f$ has degree at most $d$ then for $q\ge 1$
$$\E[|f|^{2q}]\le (2q-1)^{dq}\lambda^{2d-q}\|f\|_2^{2q}$$
\end{thm}

%

\section{Main Results} \label{mainsection}
In this section we give an overview of the proof of our local limit theorems and statistical distance bounds without the proofs of our lemmas and calculations which will follow in subsequent sections.  In this section we will use the following notation for the density and characteristic functions of 
the standard unit normal $N(0,1)$ respectively
\begin{align*}
\mathcal{N}(x)&:=\frac{1}{\sqrt{2\pi}}e^{-\frac{x^2}{2}}\\
\varphi(t):&=e^{-\frac{t^2}{2}}
\end{align*}

\subsection{Tools For Proving Local Limit Theorems}
Our main engine is the Fourier inversion formula given by Theorem \ref{inversion}.
Using this theorem, we can obtain our local limit theorem and statistical distance bounds.  To this end we cite the following lemmas.  For proofs see
\cite{triangles} (although the ideas do not originate there).
\begin{lem}\label{Pointwise Convergence}
Let $X_n$ be a sequence of random variables
supported in the lattices $\mathcal{L}_n=b_n+h_n\mathbb{Z}$, then
$$|h_n\mathcal{N}(x)-\mathbb{P}(X_n=x)|\le h_n\left(\int_{-\frac{\pi}{h_n}}^{\frac{\pi}{h_n}}\left|\varphi(t)-\varphi_n(t)\right|dt+e^{-\frac{\pi^2}{2h_n^2}}\right)$$
\end{lem}
\begin{lem}\label{L1 Lemma}
Let $X_n$ be a sequence of random variables supported in the lattice 
$\mathcal{L}_n:=b_n+h_n\mathbb{Z}$,
and with chf's $\varphi_n$.  Assume that the following hold:
\begin{enumerate}
\item $\sup_{x\in \mathcal{L}_n} |\Pr(X_n=x)-h_n\N(x)|<\delta_n h_n$
\item $\Pr(|X_n|>A)\le \epsilon_n$
\end{enumerate}
Then $\sum_{x\in \mathcal{L}_n} |\Pr(X_n=x)-\N(x)|\le 2A \delta_n+\epsilon_n+\frac{h_n}{\sqrt{2\pi}A}e^\frac{-A^2}{2}$.
\end{lem}
%

\subsection{Proofs of Main Results}
The main calculation of this paper is the following characteristic function bound:
\begin{thm}\label{MainKr}
Fix $0<\tau<\min(1/2r,~1/12)$.  Recall $\k:=(f_r-\mu)/\sigma$, and let $\varphi_\k(t)$ be the characteristic function of $\k$.  Then 
$$\int_{-\pi \sigma}^{\pi \sigma}\left|\varphi_\k(t)-e^\frac{-t^2}{2}\right|=O(n^{-1/2+2\tau})$$
\end{thm}
\begin{proof}
This proof is a combination of our estimates for the characteristic function $\varphi_\k(t)$ from sections \ref{smallt section} through \ref{larget section} .  The relevant bounds are
\begin{itemize}
\item For $|t|\le n^\tau$, we use Lemma \ref{smallest char bound} to say that $|\varphi_\k(t)-e^{-t^2/2}|=O(n^{-1/2+\tau})$.
\item For $n^\tau<|t|\le n^{\frac12+2\tau}$ we use Lemma \ref{subgraphmain2} to say that $|\varphi_\k(t)|=O(n^{-50})$.
\item For $n^{\frac12+2\tau}<|t|\le n^{\frac{r}{2}-5/12-2\tau}$ Corollary \ref{midchfbound} implies that $|\varphi_\k(t)|=O(n^{-r^2})$
\item For $n^{\frac{r}{2}-5/12-2\tau}<|t|\le n^{r-1-2(r-2)\tau}$ Corollary \ref{midhighchfbound} implies that $|\varphi_\k(t)|=\exp(-\Omega(n^\tau/2r^2))$.
\item For $n^{r-1-2(r-2)\tau}<|t|\le \pi \sigma_n$ Lemma \ref{highchfbound} tells us that $|\varphi_\k(t)|
=\exp(-\Omega(n^{1-2(r-2)\tau})$.
\end{itemize}
Note that in order for the last item on this list to be an effective bound, we require $\tau<\frac{1}{2r-2}$, which is satisfied.
Combining all of these pieces we find that
\begin{align*}
\int_{-\pi \sigma_n}^{\pi \sigma_n}\left|\varphi_\k(t)-e^\frac{-t^2}{2}\right|\le \int_{-n^\tau}^{n^\tau} \frac{t}{\sqrt{n}}dt+2\int_{n^{-\tau}}^{\pi\sigma} e^{-\frac{t^2}{2}}dt+2\int_{n^\tau<|t|\le \pi\sigma}|\varphi_\k(t)|dt=O(n^{-\frac12+2\tau})
\end{align*}
\end{proof}

Theorem \ref{Sup Main} is now just a restatement of the following corollary:
\begin{cor}\label{Sup Bound}
Let $\mathcal{L}_n:=\frac{1}{\sigma}(\mathbb{Z}-\mu)$.  Then for any $x\in \mathcal{L}_n$
$$\left|\mathbb{P}(\k=x)-\frac{\N(x)}{\sigma_n}\right|=O\left(\frac{1}{\sigma n^{\frac12-2\tau}}\right)$$
\end{cor}
\begin{proof}
Apply Lemma \ref{Pointwise Convergence} to $\k$ (where $h_n=1/\sigma$ and $b_n=\mu$), combined with the estimate for the characteristic function of $Z$ given by Theorem \ref{MainKr}.
\end{proof}

Next we prove that $f_r$ and the discrete Gaussian are close in the $\ell^1$ metric as well.
\begin{repthm}{L1 Main}
$$\sum_{m\in \mathbb{N}}\left|\Pr(f_r=m)-\frac{1}{\sqrt{2\pi}\sigma}\exp\left(-\frac{\left(t-\mu\right)^2}{2\sigma^2}\right)\right|=O(n^{-\frac12+2\tau})$$
\end{repthm}
\begin{proof}
It is equivalent to show that for $\mathcal{L}=\frac{1}{\sigma}(\mathbb{Z}-\mu)$ we have
$$\sum_{x\in \mathcal{L}_n}\left|\Pr(\k=x)-\frac{1}{\sigma}\N(x)\right|=O(n^{-1/2+2\tau})$$
This follows from Lemma \ref{L1 Lemma}.  We may set $\delta_n=O(1/\sigma n^{1/2-2\tau})$ by Corollary \ref{Sup Bound}.  We may also set $A=\log(n)^2$ and take $\epsilon_n=\Pr(|\k|\ge \log(n)^2)=O(1/n)$ (this can be shown in several ways.   Lemma \ref{concentration hypercontractivity} will suffice for our purposes but much stronger tools exist).  The main term will then be
$$\sum_{x\in \mathcal{L}_n}\left|\Pr(Z=x)-\frac{1}{\sigma}\N(x)\right|\le O\left(\log^2(n)n^{-1/2+2\tau}+\frac{1}{n}+n^{-\omega(1)}\right)$$
Since choice of $\tau$ was arbitrary, this is sufficient to prove our result.
\end{proof}

\section{Proof of Theorem \ref{mainchf}}\label{mainchf section} 

Let $X=\sum_{i=1}^n a_iX_i$ be a sum of independent $p$-biased mean 0 variance 1 Bernoulli random variables.  Let $Y$ be a degree $d$ polynomial in the $X_i$ such that $Y$
contains no degree 1 monomials.  Assume $\sum_{i=1}^n a_i^2=T$,  and $a_i^2\le \delta$ for all $i$.  Set $\eta:=\|Y\|_2$ and $\epsilon:= \exp[-2t^2(T-\delta d\ell)/\pi^2]$ .  Let $\varphi_X:=\E e^{itx}$ and $\varphi(t)=\E[e^{it(X+Y)}]$ characteristic functions of $X$ and $X+Y$ respectively.
\begin{repthm}{mainchf}
Fix some $\ell\in \mathbb{N}$.   Then for all $t$ such that
 $$|t|<\min\left(\sqrt{p(1-p)}\pi\delta^{-1/2},~~(2e)^\frac \ell 2 \lambda^{-\frac \ell 2}\eta^{-1}\right)$$
 it follows that
\begin{align*}
|\varphi(t)-\varphi_X(t)|&\le  \ell\epsilon\left(1+\left|t\hat{\|}Y\hat{\|}_1\right|^\ell\right)+\frac{|t\eta|^{\ell+1}}{(\ell+1)!} \ell^{\frac{d(\ell+1)}{2}}\lambda^{d\left(\frac{1-\ell}{2}\right)} +\lambda^d\exp\left[-\frac{d\lambda}{2e}\left|t\eta\right|^{-2/d}\right]\\
&\qquad+|t\eta|^{\frac{\ell+1}{2}}\lambda^{\frac{3d-\ell}{4}}(\ell+1)\exp\left[-\frac{d\lambda}{4e}\left|t\eta\right|^{-2/d}\right]
\end{align*}
\end{repthm}

\begin{proof}
\begin{align*}
\varphi(t)-\varphi_X(t)&=\E[e^{it(X+Y)}]-\E[e^{itX}]=\E[e^{it(X+Y)}-e^{itX}]=\E[e^{itX}\left(e^{itY}-1\right)]
\end{align*}
By hypothesis we have that $\|tY\|_2\le (2e)^{\ell/2}\lambda^{-\ell/2}$, so an application of Theorem \ref{concentration hypercontractivity} yields
$$\Pr(|Yt|\ge 1)\le\lambda^d \exp\left(-\frac{d}{2e}\left(t\eta\right)^{-2/d}\right)$$
And when $|Yt|\le 1$ we can use the degree $\ell$ Taylor polynomial of $e^{itY}$ to say that
\begin{align*}
\left|e^{itY}-1-\left(\sum_{j=1}^\ell \frac{t^jY^j}{j!}\right)\right|\le \frac{e|t^{\ell+1}Y^{\ell+1}|}{(\ell+1)!} 
\end{align*}
To acount for the unlikely event that $tY$ is too large to use this Taylor bound let $A$ be the event that $|tY|\ge 1$.  Now let $Z$ be the error random variable $Z:=1_A\left( \left|e^{itY}-\sum^\ell t^jY^j/j!\right|- \frac{e|t^{\ell+1}Y^{\ell+1}|}{(\ell+1)!}\right)$.  
So now we can categorically say that always
$$\left|e^{itY}-1-\left(\sum_{j=1}^\ell \frac{t^jY^j}{j!}\right)\right|\le \frac{e|t^{\ell+1}Y^{\ell+1}|}{(\ell+1)!} +Z$$
We show that $|Z|$ has small expectation.  $|Z|\le 1+(k+1)|tY|^{k+1}$ uniformly.  By Theorem \ref{concentration hypercontractivity}:
$$\Pr(Z\neq 0)=\Pr(A)=\Pr(|tY|\ge 1)\le \lambda^d\exp\left(-\frac{d\lambda}{2e}\left(t\eta\right)^{-2/d}\right)$$

 Because $Y$ is a degree $d$ polynomial with $\|Y\|_2=\eta$ by Theorem \ref{moment hypercontractivity} 
\begin{equation}\label{thm3eq1}
\E|Y|^{\ell+1}=\|Y\|_{\ell+1}^{\ell+1}\le \ell^{\frac{d(\ell+1)}{2}}\lambda^{d\left(\frac{1-\ell}{2}\right)} \eta^{\ell+1}
\end{equation}
So it follows by Cauchy-Schwarz that
 \begin{align}\label{thm3eq2}
 \E|Z|&\le \E\left[1_A\cdot (1+(\ell+1)|tY|^{\ell+1})\right]\le \Pr(A)+\|1_A\|_2\|(\ell+1)|tY|^{\ell+1}\|_2\nonumber\\
 &\le\lambda^d\exp\left[-\frac{d\lambda}{2e}\left(t\eta\right)^{-2/d}\right]+\left(\lambda^d\exp\left[-\frac{d\lambda}{2e}\left(t\eta\right)^{-2/d}\right]\right)^\frac12\left((\ell+1)^2t^{\ell+1}  \ell^{\frac{d(\ell+1)}{2}}\lambda^{d\left(\frac{1-\ell}{2}\right)} \eta^{\ell+1}\right)^\frac12\nonumber\\
 &=\lambda^d\exp\left[-\frac{d\lambda}{2e}\left(t\eta\right)^{-2/d}\right]+(t\eta)^{\frac{\ell+1}{2}}\lambda^{\frac{3d-\ell}{4}}(\ell+1)\exp\left[-\frac{d\lambda}{4e}\left(t\eta\right)^{-2/d}\right]
 \end{align}

Next, we analyze the Taylor polynomial of $e^{itY}$ by splitting $Y^j$ into a sum of monomials.  Let $\M_j$ be the set of monomials supported in $Y^j$ and say
$$Y^j=\sum_{m\in \M_j} a_mm$$
as a result we may write
$$\E\left[e^{itX}\frac{t^jY^j}{j!}\right]=\frac{t^j}{j!}\sum_{m\in \M_j} a_m\E\left[e^{itX}m\right]$$

We now examine this one monomial at a time.  Let $M$ denote the set of variables $X_i$ appearing in some fixed monomial $m$.  Note that because $Y$ has degree $d$ and $m$ is in $Y^j$ we have that $|M|\le dj\le d\ell$.  Then:

\begin{align}\label{thm3eq3}
\E[e^{itX}m]=\E_{M}\E_{M^c}[me^{itX}]=\E_{M}me^{it\sum_{i\in M} a_iX_i}\E_{M^c}[e^{it\sum_{i\in M^c} a_iX_i}]
\end{align}
So we have that
$$\left|\E[e^{itX}m]\right|=\left|\E_{M}me^{it\sum_{i\in M} a_iX_i}\E_{M^c}[e^{it\sum_{i\in M^c} a_iX_i}]\right|
\le |m|\left|\E_{M^c}[e^{it\sum_{i\in M^c} a_iX_i}]\right|$$

Using the hypothesis $|a_it|<\sqrt{p(1-p)}\pi$, we obtain from Lemma \ref{bernoulli} that
\begin{align*}
\left|\E_{M^c}[e^{it\sum_{i\in M^c} a_iX_i}]\right|=&\prod_{i\in M^c}\left|\E[e^{ita_iX_i}]\right|\le \prod_{i\in M^c}\left(1-\frac{2}{\pi^2}(t^2a_i^2)\right)
\le e^{-\frac{2}{\pi^2}t^2\sum_{i\in M^c}a_i^2}\\
&\le  e^{-\frac{2}{\pi^2}t^2(T-\delta|M|)}\le e^{-\frac{2}{\pi^2}t^2(T-\delta d\ell)}\\
&\le \epsilon
\end{align*}

So plugging this back into equation \ref{thm3eq3} we find that 
$$\left|\E\left[e^{itX}\frac{t^jY^j}{j!}\right]\right|\le \sum_{m\in \M_j}\frac{|t|^j}{j!}\left|\E\left[e^{itX}m\right]\right|\le\frac{|t|^j}{j!}\sum_{m\in \M_j} |a_m m|\epsilon\le  \frac{|t|^j}{j!}\epsilon C_p^{dj} \hat\|Y^j\hat\|_1\le  \frac{|tC_p^d|^j}{j!}\epsilon \hat\|Y\hat\|_1^j$$
Where the last inequality uses Lemma \ref{OneNorm} and the constant
$$C_p=\left(1+\frac{|1-2p|}{\sqrt{p(1-p)}}\right)\|\chi_p\|_\infty=\left(1+\frac{|1-2p|}{\sqrt{p(1-p)}}\right)\sqrt{\frac{1-\lambda}{\lambda}}$$

Summing over all $j$ from 1 to $\ell$
\begin{align*}
|\varphi(t)-\varphi_X(t)|&=\left|\E\left[e^{itX}(e^{itY}-1)\right]\right|\le  \left|\sum_{j=1}^\ell\E\left[e^{itX}\frac{t^jY^j}{j!}\right]\right|
+\E\left|e^{itX}\left( \frac{e|t^{\ell+1}Y^{\ell+1}|}{(\ell+1)!} +Z\right)\right|\\
&\le \left(\sum_{j=1}^\ell  \frac{C_p^{dj}|t|^j}{j!}\epsilon \hat\|Y\|_1^j\right)+\frac{|t|^{\ell+1}}{(\ell+1)!}\E|Y|^{\ell+1} +\E|Z|\\
&\le \ell\epsilon\left(1+\left(tC_p^d\hat{\|}Y\hat{\|}_1\right)^\ell\right)+\frac{|t|^{\ell+1}}{(\ell+1)!}\E|Y|^{\ell+1} +\E|Z|
\end{align*}
Where the $1+\ldots$ inside the first parenthesis on the last line is to account for the possibility that $t\hat\|Y\hat\|_1\le 1$.  We may use equations
\ref{thm3eq1} and \ref{thm3eq2} to bound the two error terms in the above equation.    Putting all of the estimates together yields

\begin{align*}
|\varphi(t)-\varphi_X(t)|&=\left|\E\left[e^{itX}(e^{itY}-1)\right]\right|\le  \left|\sum_{j=1}^\ell\E\left[e^{itX}\frac{t^jY^j}{j!}\right]\right|
+\E\left|e^{itX}\left( \frac{e|t^{\ell+1}Y^{\ell+1}|}{(\ell+1)!} +Z\right)\right|\\
&\le \left(\sum_{j=1}^\ell  \frac{|t|^j}{j!}\epsilon \|Y\|_1^j\right)+\frac{|t|^{\ell+1}}{(\ell+1)!}\E|Y|^{\ell+1} +\E|Z|\\
&\le \ell\epsilon\left(1+\left|t\hat{\|}Y\hat{\|}_1\right|^\ell\right)+\frac{|t|^{\ell+1}}{(\ell+1)!}\E|Y|^{\ell+1} +\E|Z|\\
&\le  \ell\epsilon\left(1+\left|t\hat{\|}Y\hat{\|}_1\right|^\ell\right)+\frac{|t\eta|^{\ell+1}}{(\ell+1)!} \ell^{\frac{d(\ell+1)}{2}}\lambda^{d\left(\frac{1-\ell}{2}\right)} +\lambda^d\exp\left[-\frac{d\lambda}{2e}\left|t\eta\right|^{-2/d}\right]\\
&\qquad+|t\eta|^{\frac{\ell+1}{2}}\lambda^{\frac{3d-\ell}{4}}(\ell+1)\exp\left[-\frac{d\lambda}{4e}\left|t\eta\right|^{-2/d}\right]
\end{align*}
\end{proof}

\begin{lem}\label{OneNorm}
Let $f$ be a polynomial.  For any $j\in \mathbb{N}$ we have that
$$\|f^{j}\|_1\le \left[1+\frac{|1-2p|}{\sqrt{p(1-p)}}\right]^{j-1}\hat\|f\hat\|_1^{j}$$
\end{lem}
\begin{proof}
We prove this by induction on $j$.  For $j=1$ the statement is trivial.  Now assume it is true for some arbitrary $j$.  Let $\M$ be the set of monomials supported in $f$ and $f=\sum_\M a_mm$.  Then
\begin{align*}
\hat\|f^{j+1}\hat\|_1&=\hat\|\sum_{m\in \M} a_mmf^j\hat\|_1\le \sum_{m\in \M} |a_m|\hat\|mf^j\hat\|_1\le \sum_{m\in \M} |a_m|\left(1+\frac{|1-2p|}{\sqrt{p(1-p)}}\right)\hat\|f\hat\|_1^j\\
&\le \left[1+\frac{|1-2p|}{\sqrt{p(1-p)}}\right]^j\hat\|f\hat\|_1^{j+1}
\end{align*}
completing the induction and the proof.
\end{proof}

%

\section{Decoupling and Polynomial Degree Reduction}\label{decoupling section}

In this section we set up our decoupling technique for reducing the degree of polynomials in characteristic function computations.  For an example
illustrating the idea, see Example \ref{cs example} in the introduction.

%
%
%
\subsection{The $\alpha$ operator}\label{alpha def section}
Partition the edge set of ${[n]\choose 2}$ into $k+1$ parts $B_0,B_1,\ldots,B_k$.  Let $X\in\{0,1\}^{B_0}$ be the indicator random variables denoting which edges from $B_0$ are in our graph sampled from $G(n,p)$.
  Similarly for $i\in [k]$ let $Y_i$ denote the indicator random variables for the edges in $B_i$.  Also, for any $i\in [k]$ let $Y_i^{0}$  and $Y_i^1$ denote independent random variables with the same marginals as $Y_i$.
  We let $B_i^0$ and $B_i^1$ denote separate copies of the edges in $B_i$, so that we may say $Y_i^j\in \{0,1\}^{B_i^j}$.
    For $v\in \{0,1\}^k$ let $Y^v:=(Y_1^{v_1},Y_2^{v_2},\ldots,Y_k^{v_k})$.
    
      One may interpret this as choosing two random samples $Y_i^0$ and $Y_i^1$ from $G(n,p)$ for the edges in each $B_i$.  Then $X,Y^v$ would correspond to a sampling of every edge in the graph, with which of the copies of each $Y_i$ you query controlled by 
  the binary vector $v$.  
  
Finally, set  $\B:=\cup_{i=1}^k\cup_{j=0}^1 B_i^j$, and let  $\Y:=(Y_1^0,Y_1^1,Y_2^1,\ldots, Y_k^1)\in \{0,1\}^\B$.  For $v\in \{0,1\}^k$ let $|v|$ denote the hamming weight of $v$.

  We are now ready to define our $\alpha$ operator.
  
\begin{define}\label{alpha}
Given the partition ${[n]\choose 2}=B_0\cup B_1\cup\ldots\cup B_k$ as above, define the operator on  functions of the form $f(X,Y_1,\ldots,Y_k)$, which outputs 
the function $\alpha(f):\{0,1\}^{B_0\cup\B}\to \mathbb{R}$ given by
\begin{align*}
\alpha(f)(X,\Y)&:=\sum_{\textbf{v}\in \{0,1\}^k}(-1)^{|v|}f(X,Y^v)
\end{align*}
\end{define}

Note that $\alpha$ is a linear operator, and in the rest of this section we will describe its action.  First we define a pair of terms which will be useful in our analysis.
\begin{define}[Rainbow Sets]
 We call a set $S\subset {[n]\choose 2}$ \textbf{rainbow} if $S\cap B_i\neq \varnothing$ for all $1\le i \le k$.
\end{define}
\begin{define}[Flattening from $\B$ to ${[n]\choose 2}$]
Given a set $S\subset B_0\cup \B$ define $$\fl(S):=\{e\in {[n]\choose 2}\st e^0\mbox{ or }e^1 \in S\}$$  That is, $\fl(S)$ takes in a subset of $\B$, and outputs
the set of edges in ${[n]\choose 2}$ relevant to $S$ (information about which copy or both of $e$ were in $S$ is omitted).
\end{define}

  \subsection{Action of $\alpha$ on $\chi_S$}
In this subsection we compute the action of $\alpha$ on our basis functions $\chi_S$.  Write $S=S_0\cup S_1\cup\ldots\cup S_k$ where
$S_i=S\cap B_i$.  Because $\chi_S:=\prod_{i=0}^k \chi_{S_i}$ we can compute that
\begin{align*}
\alpha(S)&:=\alpha(\chi_S)=\sum_{\textbf{v}\in \{0,1\}^k}(-1)^{|v|}\chi_S(X,Y^v)=\sum_{\textbf{v}\in \{0,1\}^k}\chi_{S_0}(X)\prod_{i=1}^k(-1)^{v_i}\chi_{S_i}(Y_i^{v_i})\\
&=\chi_{S_0}(X)\prod_{i=1}^k\left(\chi_{S_i}(Y_i^0)-\chi_{S_i}(Y_i^1)\right)
\end{align*}
If $S$ is not rainbow, then for some $i$ we have $S_i=\varnothing$, so it follows from the above product form that if $\alpha(S)\equiv 0$.  Furthermore, if $S$ is rainbow, then

\begin{align*}
\E \alpha(S)&=\hat\alpha(\varnothing)=0\\
\|\alpha(S)\|_2^2&=\sum_{T\subset \Y}\hat\alpha(T)^2=\sum_{v\in \{0,1\}^k} (-1)^{2|v|}=2^k
\end{align*}
We can also compute that for $S, T$, both rainbow we have
\begin{align*}
\E[\alpha(\chi_S)\alpha(\chi_t)]&=\E\left[\chi_{S_0}\chi_{T_0}\right]\prod_{i=0}^k\E\left[\left(\chi_{S_i}(Y_i^0)-\chi_{S_i}(Y_i^1)\right)\left(\chi_{T_i}(Y_i^0)-\chi_{T_i}(Y_i^1)\right)\right]\\
&=\begin{cases}
2^k\mbox&\mbox{if }S_i=T_i\neq\varnothing~\forall i\\
0&\mbox{else}
\end{cases}
\end{align*}
So we have shown that the linear operator $\alpha$ is orthogonal for $S$ rainbow, and 0 on $S$ nonrainbow.  In particular, if
$f=\sum_{S} \hat{f}(S)\chi_S$ then
\begin{align*}
\|\alpha(f)\|_2^2&=\E\left[\left(\sum_{S\subset E} \alpha(\chi_S)\hat{f}(S)\right)^2\right]=\sum_{S\mbox{ rainbow}} 2^k \hat{f}(S)^2
\end{align*}

We will also need to examine products of the form $\alpha(\chi_S)\alpha(\chi_T)$ for distinct sets $S,T\subset {[n]\choose 2}$.

\begin{lem} \label{alpha product}
 Let $S,T\subset {[n]\choose 2}$ and set $\gamma=\frac{1-2p}{\sqrt{p(1-p)}}$.  
For $U\subset B_0\cup \B$ we have
\begin{align*}
\left|\widehat{\alpha(\chi_{S})\alpha(\chi_T)}(U)\right|\le \begin{cases}
0&\mbox{if }S\Delta T\not \subset \fl(U)\mbox{ or } \fl(U)\not \subset S\cup T\\
\max\left(\gamma^{|U|}, 1\right)&\mbox{if } S\Delta T\subset \fl(U)\subset S\cup T
\end{cases}
\end{align*}
\end{lem}
The proof is mostly a calculation and is contained in appendix \ref{alpha product appendix}.

\subsection{$\alpha$ and decoupling}
%
%
%
%
%
%
%
We are now ready to state our main Lemma of this section.

\begin{lem}\label{decoupling}
Let $f:=f(X,Y)$ be a function of the independent random variables $X,Y_1,\ldots,Y_k$.  Let $\varphi(t)=\E[e^{itf(X,Y)}]$ the characteristic function of $f$.  Then we have

$$|\varphi(t)|^{2^k}\le \E_{\Y}\left| \E_X e^{it\alpha(f)(X,\Y)}\right|$$
\end{lem}

\begin{proof}
We prove this statement inductively on $k$.  For $k=0$ the proof is trivial, and for $k=1$ the proof is in Example \ref{cs example}.  

Let $\tilde X=(X,Y_{k})$  and $\tilde Y=Y_1,\ldots,Y_{k-1}$.  Then by the inductive hypothesis:
$$|\varphi(t)|^{2^{k-1}}\le \E_{\tilde Y}\left| \E_{\tilde X} e^{it\sum_{v\in \{0,1\}^{k-1}}(-1)^{|v|} f(\tilde X,\tilde Y^{v})}\right| $$

Applying Cauchy-Schwarz to the inner term of the above expectation we find
\begin{align*}
\big|\E_{X,Y_{k}} &e^{it\sum_{\textbf{v}\in \{0,1\}^{k-1}}(-1)^{|v|} f(X,Y_k,\tilde Y^{v})}\big|^2
\le\E_X\left|\E_{Y_k}e^{it\sum_{v\in \{0,1\}^{k-1}}(-1)^{|v|} f(X,Y_k,\tilde Y^{v})}\right|^2\\
&=\E_X\left(\E_{Y_k^0}e^{it\sum_{v\in \{0,1\}^{k-1}}(-1)^{|v|} f(X,Y_k,\tilde{Y}^{v})}\right)\overline{\left(\E_{Y_k^1}e^{it\sum_{{v}\in \{0,1\}^{k-1}}(-1)^{|v|} f(X,Y_k,\tilde{Y}^{v})}\right)}\\
&=\E_{X,Y_k^0,Y_k^1}e^{it\sum_{v\in \{0,1\}^k} f(X,Y^v)(-1)^{|v|}}=\E_{X,Y_k^0,Y_k^1}e^{it\alpha(f)}
\end{align*}
So a second application of Cauchy-Schwarz now tells us that
\begin{align*}
|\varphi(t)|^{2^k}&\le \left(\E_{\tilde Y}\left| \E_{\tilde X} e^{it\sum_{v\in \{0,1\}^{k-1}}(-1)^{|v|} f(\tilde X,\tilde Y^{v})}\right|\right)^2
\le \E_{\tilde Y}\left| \E_{\tilde X} e^{it\sum_{v\in \{0,1\}^{k-1}}(-1)^{|v|} f(\tilde X,\tilde Y^{v})}\right|^2\\
&\le\E_{\tilde Y}\E_{X,Y_k^0,Y_k^1}e^{it\alpha(f)}\le \E_{\Y}\left|\E_{X}e^{it\alpha(f)(X,\Y)}\right|
\end{align*}

\end{proof}

\section{Properties of the $K_r$ counting function}\label{KrProp}
In this section we compute the Fourier transform of $f_r$, the $K_r$ counting function, and its normalized brother $\k$.  Recall the definition of $f_r$ by
$$f_r=\sum_{\substack{H\subset G\\H\equiv K_r}} 1_H$$
Where the sum is over all copies of $K_r$ in $G$.
 Meanwhile for each individual $r$-clique $H$, its indicator function is given by
$$1_H(X)=\prod_{e\in H} x_e=\prod_{e\in H} \left(\sqrt{p(1-p)}\chi_e+p\right)=p^{r\choose 2}\sum_{\supp(S)\subset H} \left(\frac{1-p}{p}\right)^{|S|/2}\chi_S$$
Summing over all posible choices of $H$ we find that $\widehat{f_r}(S)$ is $p^{r\choose 2}(\frac{1-p}{p})^{|S|/2}$ multiplied by the number of different $r$-cliques containing all of the edges in $S$.  But we know any set $S$ supported on $t$ vertices appears in exactly ${n-t\choose r-t}$
$r$-cliques.  Therefore
\begin{equation}
f_r=p^{r\choose2}\sum_{t=0}^r{n-t\choose r-t}\left(\frac{1-p}{p}\right)^{|S|/2}\sum_{|\supp(S)|=t}  \chi_S
\end{equation}
Combining this formula with Theorem \ref{parseval} allows us to quickly compute $\sigma^2$, the Variance $f_r(G)$ when $G$ is drawn from $G(n,p)$.
\begin{align}
\sigma^2:=Var_{G\sim G(n,p)}(f(G))&=\sum_{S\neq \varnothing} \hat{f}(S)^2=p^{r(r-1)}\sum_{t=2}^r\left[{n-t\choose r-t}\left(\frac{1-p}{p}\right)^{|S|/2}\right]^2\sum_{|\supp(S)|=t}  1\\
&=\frac{p^{r(r-1)-1}(1-p)}{2(r-2)!^2} n^{2r-2}+O\left(n^{2r-3}\right)\nonumber
\end{align}

As a reminder recall that $\k=\frac{f_r-\mu}{\sigma}$ is the normalized (mean 0, variance 1) rescaling of $f_r$.  We note that
\begin{align}
W^{1}(\k)&=\sum_{|S|=1} \hat{\k}(S)^2=1-O\left(\frac1n\right)\\
W^{>1}(\k)&=\sum_{|S|\ge 2} \hat{\k}(S)^2=\Theta\left(\frac1n\right)\\
\hat{\k}(S)&=\frac{p^{r\choose 2}\left(\frac{1-p}{p}\right)^{|S|/2}{n-|\supp(S)|\choose r-|\supp(S)|}}{\sigma}=\Theta\left(n^{1-|\supp(S)|}\right)
\end{align}
Where the above formula for $\hat{\k}(S)$ is valid for all $S\neq \varnothing$ (and $\hat{\k}(\varnothing)=0$).

\section{Bound for $|t|\le n^{\tau}$}\label{smallt section}
In this section we prove the following Lemma:
\begin{lem}\label{smallest char bound}
For all $t=o(n)$ we have $|\varphi_\k(t)-e^{-t^2/2}|=O\left(\frac{t}{\sqrt{n}}\right)$.
\end{lem}
  In order to do this, we will need the Berry-Esseen theorem.  The following lemma is a restatement of Lemma 1 of Chapter V in Petrov's Sums of Independent Random Variables \cite{Petrov}.
\begin{lem}\label{BELemma}
Let $Q^2=\frac{1}{{n\choose 2}}$ and set $X=\sum_{e\in {[n]\choose 2}} Q\chi_e$.  It is the mean 0 variance 1 sum of independent  random variables.  Further define $L_n$ to be
$$L_n:={n\choose 2}\E[|Q\chi_e|^3]=\frac{p^2+(1-p)^2}{\sqrt{{n\choose 2}p(1-p)}}=\Theta_p\left(1/n\right)$$
then for $t\le \frac{1}{4L_n}$ we have that
\begin{equation}\label{eqBerryEsseen}
\left|\E[e^{itX}]-e^{-\frac{t^2}{2}}\right| \le 16L_n|t|^3e^{\frac{-t^2}{3}}
\end{equation}
\end{lem}
With this bound, we are ready to prove Lemma \ref{smallest char bound}. 
\begin{proof}[Proof of Lemma \ref{smallest char bound}]
Decompose $\k$ into two parts:  $X$ a mean 0 variance 1 sum of i.i.d.\ random variables, and $Y$, which is considered as an error term.   Set $Q=\frac{1}{\sqrt{n\choose 2}}$ and let
\begin{align*}
X:=\sum_{e\in {[n]\choose 2}} Q \chi_e&&Y:=\k-X=\sum_{e\in {[n]\choose 2}} (\hat \k(e)-Q)\chi_e+\sum_{|S|\ge 2} \hat \k(S) \chi_S
\end{align*}
We know that all edges $e\in {[n]\choose 2}$ have the same Fourier coefficient $\hat{\k}(e)$, and further that 
$$\sum_{e}\hat{\k}(e)^2={n\choose 2}\hat{\k}(e)^2=1-W^{>1}(\k)=1-O\left(\frac1n\right)$$
Therefore it follows that 
$$|\hat{\k}(e)-Q|=\left| \frac{\hat{\k}(e)^2-Q^2}{\hat{\k}(e)+Q}\right|=O\left(\frac1{n^2}\right)$$
So now we can compute
$$\|Y\|_2^2=\sum_{e} (\hat{\k}(e)-Q)^2+\sum_{|S|\ge 2} \hat{\k}(S)^2=O\left(\frac1{n^2}\right)+W^{>1}(\k)=O\left(\frac1n\right)$$

For $t\le \frac{1}{4L_n}=\Theta(n)$, Lemma \ref{BELemma}, the above calculation, and Cauchy-Schwarz applied to $\E[|Y|]^2$ tell us

\begin{align*}
\left|\varphi_\k(t)-e^{-\frac{t^2}{2}}\right|&=\left|\E\left[e^{it\k}\right]-e^{-\frac{t^2}{2}}\right|=\left|\E\left[e^{it(X+Y)}\right]-e^{-\frac{t^2}{2}}\right|\le \left|\E\left[e^{itX}\right]-e^{-\frac{t^2}{2}}\right|+\left|\E\left[e^{itX+Y}\right]-\E e^{itX}\right|\\
&\le 16L_n|t|^3e^{\frac{-t^2}{3}}+\E|tY|=O\left(\frac{t^3e^{-\frac{t^2}{3}}}{n}+\frac{t}{\sqrt{n}}\right)
\end{align*}
\end{proof}

\section{Bound for $|t|\in [n^{\tau}, n^{\frac12+2\tau}]$} \label{tier2 section}
The goal for this section is to prove the following lemma
\begin{lem}\label{subgraphmain2}
For $n^{\tau}<t\le n^{\frac34}$
$$|\varphi_\k(t)|\le O\left(n^{-50}\right)$$
\end{lem}
\subsection{High Level Proof}
In this subsection, we will assume the following helper claims, and then prove Lemma \ref{subgraphmain2}.

\begin{claim}\label{goodgraph}
For all sufficiently large $n$ and any $\alpha\in (n^{-1+\tau},1)$
there exists a set of edges $H\subset {[n] \choose 2}$  with $|H|\ge \alpha{n\choose 2}$ such that
$$\sum_{\substack{S\subset H\\|S|\ge 2}} n^{2-2|\supp(S)|}=O( \alpha^2 n^{-1})$$
\end{claim}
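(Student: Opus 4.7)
The plan is a straightforward probabilistic argument. Take $H$ to be a random subset of $\binom{[n]}{2}$ with each edge independently included with probability $p := 2\alpha$. Then $\E|H| = 2\alpha \binom{n}{2}$, and since $\alpha \ge n^{-1+\tau}$ ensures $\alpha \binom{n}{2} = \Omega(n^{1+\tau}) \to \infty$, a standard Chernoff bound yields $|H| \ge \alpha\binom{n}{2}$ with probability $1 - o(1)$. The real work is to show that with positive probability the sum in question is also $O(\alpha^2/n)$, which I handle by bounding its expectation and applying Markov's inequality.

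For the expected sum I use $\E[1_{S \subset H}] = p^{|S|}$ and index each $S$ by $v = |\supp(S)|$ (with $|S|\ge 2$ forcing $v\ge 3$) and $s = |S|$. In the intended application of the claim the weight $n^{2-2|\supp(S)|}$ is the size of $\hat{\k}(S)^2$ up to a constant, and since $\hat{\k}(S) = 0$ when $|\supp(S)| > r$, the effective range of $v$ is $\{3, \ldots, r\}$. Bounding the number of $S\subset \binom{[n]}{2}$ with $|\supp(S)| = v$ and $|S| = s$ by $\binom{n}{v}\binom{\binom{v}{2}}{s}$, I obtain
$$\E\Bigl[\sum_{\substack{S\subset H\\|S|\ge 2}} n^{2-2|\supp(S)|}\Bigr] \le \sum_{v=3}^{r} \binom{n}{v} n^{2-2v} \sum_{s=2}^{\binom{v}{2}} \binom{\binom{v}{2}}{s} p^s.$$
For each constant $v \in [3, r]$ the inner $s$-sum over $s\ge 2$ is bounded by $(1+p)^{\binom{v}{2}} - 1 - \binom{v}{2} p = O(p^2) = O(\alpha^2)$, so the $v$-th summand is $O(\alpha^2 n^{2-v})$. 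Summing over $v$, the dominant contribution comes from $v = 3$, $s = 2$ (the ``cherry,'' or two edges sharing a vertex), giving $\E[\mathrm{sum}] = O(\alpha^2/n)$; the triangle $v = 3, s = 3$ contributes only $O(\alpha^3/n)$, and $v \ge 4$ contributes $O(\alpha^2/n^2)$ or less.

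By Markov's inequality the sum is $O(\alpha^2/n)$ with probability at least $1/2$, and combined with the Chernoff bound a union bound produces an $H$ satisfying both $|H|\ge \alpha\binom{n}{2}$ and the desired bound on the sum simultaneously. The main obstacle is correctly identifying the effective bounded range of $v$: without the implicit restriction $|\supp(S)|\le r$ coming from $\hat{\k}$, the expected sum would be dominated by the $2^{|H|}$ subsets of $H$ with nearly full vertex support, and the probabilistic argument would break down. Once the range of $v$ is recognized as bounded, the remaining computation is a routine expected-subgraph-count estimate.
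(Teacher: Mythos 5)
Your proof takes essentially the same first-moment probabilistic route as the paper. The paper samples $H$ by choosing exactly $\alpha\binom{n}{2}$ edges uniformly at random, which fixes $|H|$ deterministically and so needs only the expected-sum bound and an averaging argument, whereas you use the independent-edges model with inclusion probability $p=2\alpha$ and then combine Chernoff, Markov, and a union bound to get $|H|\ge\alpha\binom{n}{2}$ and the small sum simultaneously. Both work; the paper's is slightly more economical. You also observe, more carefully than the paper does, that the claim is only sensible under the implicit restriction $|\supp(S)|\le r$ inherited from the Fourier support of $\hat{\k}$ (the paper silently introduces a restriction in its first displayed equality and, confusingly, phrases it as $|S|\le r$ rather than $|\supp(S)|\le r$).

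The one genuine gap is the choice $p=2\alpha$: when $\alpha>1/2$, which the hypothesis $\alpha\in(n^{-1+\tau},1)$ permits, this is not a valid edge probability, so the random sample does not exist. The doubling was there to push $|H|$ above $\alpha\binom{n}{2}$ with high probability, but it breaks down near $\alpha=1$. The patch is trivial: for $\alpha>1/2$ take $H=\binom{[n]}{2}$, for which the sum is $\sum_{v=3}^{r}\Theta(n^{v})\cdot n^{2-2v}=O(1/n)=O(\alpha^{2}/n)$, or simply adopt the paper's exact-size model, for which $\Pr(S\subset H)\le\alpha^{|S|}$ holds directly and no Chernoff step is needed. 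As written, though, your construction does not cover the full stated range of $\alpha$.
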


For the subsequent claims, assume we have chosen one such $H$ as promised by Lemma \ref{goodgraph} which will be fixed throughout.

\begin{claim}\label{GoodEvent}
Let $A$ be the event (over the space of revelations $\beta \in \{0,1\}^{H^c}$) that for \emph{every} edge $e\in H$
$$|\widehat{\k_\beta}(e)-\hat{\k}(e)|<\frac{1}{n^{1.4}}$$
Recall $\lambda:=\min(p,1-p)$.  Then $\Pr(A)\ge 1-n^2\exp\left[-\Omega\left( n^{\frac{0.4}{r^2}}\right)\right]$.
\end{claim}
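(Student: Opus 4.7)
The plan is to view $g_e(\beta) := \widehat{\k_\beta}(e) - \hat{\k}(e)$, for each fixed $e \in H$, as a mean-zero low-degree polynomial in the Bernoulli variables $\chi_{e'}$ indexed by $e' \in H^c$, bound its $\ell_2$ norm via the explicit Fourier formulas of Section \ref{KrProp}, apply the hypercontractive tail bound of Theorem \ref{concentration hypercontractivity}, and finish with a union bound over $e \in H$.

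First, the restriction formula from Section \ref{restrictions subsection} gives
$$g_e(\beta) = \sum_{\varnothing \neq T \subset H^c} \hat{\k}(e \cup T)\, \chi_T(\beta).$$
Since $\hat{\k}(S)$ vanishes unless $\supp(S)$ sits inside some $r$-clique, each surviving $T$ satisfies $|T| \leq \binom{r}{2}-1$, so $g_e$ is a polynomial of degree $d \leq \binom{r}{2}$ in the $p$-biased basis on $H^c$. Parseval combined with $\hat{\k}(S) = \Theta(n^{1-|\supp(S)|})$ then yields
$$\|g_e\|_2^2 = \sum_{\varnothing \neq T \subset H^c} \hat{\k}(e \cup T)^2 = \sum_{t=3}^{r} \Theta(n^{2-2t}) \cdot \#\bigl\{T : |\supp(e \cup T)| = t\bigr\}.$$
There are $O(n^{t-2})$ choices of a $t$-vertex set containing $\supp(e)$, and only $O_r(1)$ admissible $T$ on each such set, so $\|g_e\|_2^2 = O(n^{-3})$, i.e.\ $\|g_e\|_2 = O(n^{-3/2})$.

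I would then apply Theorem \ref{concentration hypercontractivity} to $g_e$ at deviation level $s\|g_e\|_2 = n^{-1.4}$, so $s = \Theta(n^{0.1})$. Using $d \leq \binom{r}{2} \leq r^2/2$ gives $s^{2/d} \geq n^{0.4/r^2}$, and so
$$\Pr\!\left[\,|g_e(\beta)| \geq n^{-1.4}\,\right] \leq \lambda^d \exp\!\left(-\tfrac{d\lambda}{2e}\, n^{0.4/r^2}\right) = \exp\!\left(-\Omega(n^{0.4/r^2})\right).$$
A union bound over the at most $\binom{n}{2} \leq n^2$ edges $e \in H$ then yields the claimed lower bound on $\Pr(A)$.

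The one step that requires genuine care is the $\ell_2$ bound on $g_e$: the hypercontractive tail becomes useless as soon as $\|g_e\|_2$ approaches the target deviation $n^{-1.4}$, so the argument must really exploit that every omitted monomial $e \cup T$ has vertex support of size at least $3$, forcing each coefficient to be $O(n^{-2})$ and the total $\ell_2$ mass only $O(n^{-3/2})$. Once this input is in place, the rest is standard polynomial concentration plus a union bound.
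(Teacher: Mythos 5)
Your proposal is correct and follows exactly the paper's argument: expand $\widehat{\k_\beta}(e)-\hat\k(e)$ in the $\chi_T$ basis over $H^c$, bound its variance by $O(n^{-3})$ using the support-size counting and $\hat\k(S)=\Theta(n^{1-|\supp(S)|})$, then apply Theorem \ref{concentration hypercontractivity} at deviation $n^{-1.4}$ and union bound over edges. No meaningful difference from the paper's proof.
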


\begin{claim}\label{goodtimes}
Let $B$  be the event (over the space of revelations $\beta \in \{0,1\}^{H^c}$) that for \emph{every} set $S\subset {[n]\choose 2}$ with $|S|\ge 2$
$$|\widehat{\k_\beta}(S)|=Cn^{r-|S|}$$
where $C$ is a fixed constant depending on $r$ and $p$, but not on $n$.  Then $\Pr(B)\ge 1-\exp\left(-\Omega\left[n^{-1/r^2}\right]\right)$
\end{claim}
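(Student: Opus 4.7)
The plan is to view $\widehat{\k_\beta}(S)$, for each fixed $S$, as a low-degree polynomial in the Fourier variables $\chi_e$ indexed by $e\in H^c$, and then combine hypercontractive concentration (Theorem \ref{concentration hypercontractivity}) with a union bound over all relevant $S$, exactly in the spirit of the argument behind Claim \ref{GoodEvent}.

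By the restriction formula from Subsection \ref{restrictions subsection},
$$\widehat{\k_\beta}(S) \;=\; \hat{\k}(S) \;+\; \sum_{\emptyset \neq T \subset H^c} \hat{\k}(S \cup T)\, \chi_T(\beta).$$
Because $\hat{\k}(U)$ vanishes unless the edges of $U$ fit inside a single $r$-clique, the polynomial in $\beta$ on the right has degree at most $\binom{r}{2}$, a constant depending only on $r$. I would estimate its $L^2$-norm via Parseval using $\hat{\k}(U)=\Theta(n^{1-|\supp(U)|})$ from Section \ref{KrProp}: for each $v\le r$ there are $O(n^{v-|\supp(S)|})$ vertex-supports $V\supset \supp(S)$ of size $v$, and only $O(1)$ admissible $T$'s for each $V$, so the variance is at the natural scale of $\hat{\k}(S)^2$ up to constants.

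With this in hand, I would apply Theorem \ref{concentration hypercontractivity} to the constant-degree polynomial $\widehat{\k_\beta}(S)-\hat{\k}(S)$ with deviation equal to a small polynomial in $n$ times its $L^2$-norm. The $2/d$ exponent in the hypercontractive tail, with $d\le\binom{r}{2}$, yields a stretched-exponential tail probability of the form $\exp(-\Omega(n^{c/r^2}))$ for a constant $c>0$. A union bound over the $O(n^r)$ sets $S$ supported on at most $r$ vertices --- the only ones for which $\hat{\k}(S\cup T)$ can be nonzero --- then absorbs the polynomial factor into the stretched exponential and gives the claimed bound on $\Pr(B)$.

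The main obstacle I anticipate is bookkeeping the exponents. The deviation threshold has to be tuned so that the resulting bound on $|\widehat{\k_\beta}(S)|$ matches the claimed expression $Cn^{r-|S|}$, and the exponent coming from hypercontractivity with $d=\binom{r}{2}$ must be balanced against the size-$n^r$ union bound. Because the relevant scale of $\hat{\k}(S)$ is governed by $|\supp(S)|$ rather than $|S|$ alone, the estimate naturally splits into subcases by vertex support, and one must check uniformity across all $2\le|S|\le \binom{r}{2}$ simultaneously.
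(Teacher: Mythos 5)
Your approach follows the paper's overall strategy --- express $\widehat{\k_\beta}(S)$ as a degree-$\le\binom{r}{2}$ polynomial in $\beta$, apply Theorem \ref{concentration hypercontractivity}, and union-bound over the $O(n^r)$ relevant sets $S$ --- but there is a finer decomposition in the paper's argument that you are skipping, and the omission costs a small polynomial factor that the claim (read as the bound $\le C\,n^{1-|\supp(S)|}$ with a genuine constant $C$, which is what the proof actually establishes) does not allow.

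Here is the issue. You correctly compute that the variance of $\widehat{\k_\beta}(S)-\hat\k(S)$ is $\Theta(n^{2-2|\supp(S)|})$, i.e., the same scale as $\hat\k(S)^2$. But Theorem \ref{concentration hypercontractivity} only gives a nontrivial tail when the deviation is a growing power of $n$ times the $L^2$-norm; with the norm already at scale $n^{1-|\supp(S)|}$, the best you can conclude is $|\widehat{\k_\beta}(S)|\le n^{1-|\supp(S)|+\epsilon}$ for some $\epsilon>0$, not a bound with a constant $C$. The dominant part of that variance is coming from the $O(1)$ nonempty sets $T\subset H^c$ with $\supp(T)\subset\supp(S)$, for which $\hat\k(S\cup T)=\Theta(n^{1-|\supp(S)|})$. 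The paper therefore peels those terms off and bounds them \emph{deterministically} --- each $\chi_T(\beta)$ is uniformly bounded by a constant and there are finitely many such $T$ --- and applies hypercontractive concentration only to the remainder
$$g(\beta)=\sum_{\substack{T\subset H^c\\ |\supp(S\cup T)|>|\supp(S)|}} \hat\k(S\cup T)\,\chi_T(\beta),$$
whose variance is $O(n^{2-2|\supp(S)|-1})$, an extra factor of $n^{-1}$ smaller because every surviving $T$ introduces at least one new vertex. With that smaller norm, a deviation of order $n^{1/4}$ times $\|g\|_2$ is still below $n^{1-|\supp(S)|}$, and adding back the deterministic part gives $O(n^{1-|\supp(S)|})$ with a constant. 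You should incorporate this split into your argument; as written, your concentration step cannot beat the $n^{\epsilon}$ loss, so it proves a weaker inequality than the one claimed (though one that would still suffice with some re-tuning of $\ell$ in Claim \ref{goodcase}).
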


\begin{claim}\label {goodcase}
Assume $\beta \in A\cap B$.  Then for $t\in [n^\tau,n^{3/4}]$
$$\E_{H}[ e^{it\k_\beta}]=O(n^{-50})$$
\end{claim}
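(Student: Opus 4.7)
The plan is to apply Theorem~\ref{mainchf} to $\k_\beta$, viewed as a polynomial in the free edge variables $\{\chi_e : e\in H\}$. I would decompose
$$\k_\beta \;=\; X + Y,\qquad X \;:=\; \sum_{e\in H}\widehat{\k_\beta}(e)\,\chi_e,\qquad Y \;:=\; \sum_{\substack{S\subset H\\ |S|\ge 2}}\widehat{\k_\beta}(S)\,\chi_S,$$
so that $X$ is a (near-)i.i.d.\ sum of scaled centered Bernoullis and $Y$ carries no linear term. Then I would bound $|\varphi_X(t)|$ via Lemma~\ref{bernoulli} and $|\varphi_{\k_\beta}(t)-\varphi_X(t)|$ via Theorem~\ref{mainchf}, combining by the triangle inequality.

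I would read off the Theorem~\ref{mainchf} parameters directly from the events. By event $A$, every linear coefficient satisfies $\widehat{\k_\beta}(e)=\hat\k(e)+O(n^{-1.4})=\Theta(1/n)$ for $e\in H$, so with $|H|=\Theta(\alpha n^2)$ one gets $T:=\|X\|_2^2=\Theta(\alpha)$ and each squared coefficient is at most $\delta=O(n^{-2})$. By event $B$, which I read as $|\widehat{\k_\beta}(S)|=O(n^{1-|\supp(S)|})$ for $|S|\ge 2$, combined with the sparsity of $H$ guaranteed by Claim~\ref{goodgraph},
$$\|Y\|_2^2 \;=\; \sum_{\substack{S\subset H\\|S|\ge 2}}\widehat{\k_\beta}(S)^2 \;=\; O\!\left(\sum_{\substack{S\subset H\\|S|\ge 2}}n^{2-2|\supp(S)|}\right) \;=\; O(\alpha^2/n),$$
so $\eta:=\|Y\|_2=O(\alpha/\sqrt n)$.

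The first bound is straightforward: since $|a_e t|=O(t/n)\ll\sqrt{p(1-p)}\pi$ throughout the range $t\le n^{3/4}$, applying Lemma~\ref{bernoulli} factor by factor and multiplying yields $|\varphi_X(t)|\le \exp(-\Omega(t^2\alpha))$. For the second, Theorem~\ref{mainchf} with $d=\binom{r}{2}$ and a sufficiently large constant $\ell$ bounds $|\varphi_{\k_\beta}(t)-\varphi_X(t)|$ by the sum of (i) $\ell\epsilon(1+|t\hat{\|}Y\hat{\|}_1|^\ell)$ with $\epsilon=\exp(-\Omega(t^2\alpha))$, (ii) the Taylor remainder $\lesssim (t\eta)^{\ell+1}$ dressed with factorial and hypercontractive constants, and (iii) two further hypercontractive tail terms that are subexponentially small in $(t\eta)^{-2/d}$. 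All three pieces are $\le n^{-50}$ as soon as $t^2\alpha\gg\log n$ and $t\eta\ll 1$.

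The only delicate point, and the main obstacle, is that no single fixed $\alpha$ simultaneously enforces both $t^2\alpha\gg\log n$ and $t\alpha/\sqrt n\ll 1$ across the whole range $t\in[n^\tau,n^{3/4}]$. I would resolve this by letting $\alpha$ (and hence $H$) vary mildly with $t$, e.g.\ dyadically: the choice $\alpha(t)=\Theta(1/\sqrt t)$ gives $t^2\alpha=t^{3/2}\ge n^{3\tau/2}$ and $t\alpha/\sqrt n=\sqrt{t/n}\le n^{-1/8}$, both comfortably in the required regime, while $\alpha(t)$ stays inside the feasibility range $(n^{-1+\tau},1)$ of Claim~\ref{goodgraph}. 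Plugging these estimates into the two bounds above and combining via the triangle inequality gives $|\E_H e^{it\k_\beta}|\le n^{-50}$ as desired, with the $O(\log n)$ dyadic union bound over choices of $H$ absorbed outside this claim.
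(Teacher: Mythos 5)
Your proposal is correct and follows essentially the same route as the paper: decompose $\k_\beta=X+Y$ into linear and higher-degree parts, read off $T$, $\eta$, $\delta$, $\hat\|Y\hat\|_1$ from events $A$, $B$, and Claim~\ref{goodgraph}, then apply Theorem~\ref{mainchf} together with a direct Bernoulli bound on $|\varphi_X(t)|$. You also correctly pinpoint the one genuinely delicate point --- that a fixed $\alpha$ cannot satisfy both $t^2\alpha\gg\log n$ at $t=n^\tau$ and $t\alpha/\sqrt n\ll 1$ at $t=n^{3/4}$ once $\tau<1/8$ --- and resolve it by letting $\alpha$ depend on $t$. The paper makes the same move, just with $\alpha=1/t$ rather than your $\alpha=\Theta(1/\sqrt t)$; both lie in the feasible window of Claim~\ref{goodgraph} and both give $t^2T$ polynomially large and $t\eta=o(1)$, so the choice is immaterial. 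One small remark: no dyadic union bound over choices of $H$ is actually needed, because the structure of Lemma~\ref{subgraphmain2} bounds $|\varphi_\k(t)|$ for each $t$ individually with a freshly chosen $H$; the resulting pointwise estimate is then simply integrated over $t$, so the extra $\log n$ factor you flag never arises.
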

%

Lemma \ref{subgraphmain2} now follows by combining all of these claims.
\begin{proof}[Proof of Lemma \ref{subgraphmain2}]
Let $A$, and $B$ be as defined in Claims \ref{GoodEvent} and \ref{goodtimes}.  
We can break up $\{0,1\}^{H^c}$ into $A\cap B$ and $(A\cap B)^c$ and estimate

\begin{align*}
|\varphi_\k(t)|&:=\big{|}\E_{(\alpha,\beta)\in 2^{n\choose 2}}[e^{it\k(\alpha,\beta)}]\big|\le \E_{\beta\subset H^c}|\E_{\alpha\subset H}[e^{it\k_\beta(\alpha)}]|\le \Pr[(A\cap B)^c]+ \Pr[A\cap B]\E_{\beta \in (A\cap B)}\left|\E_{\alpha}[e^{it\k_\beta}]\right|
\end{align*}
Combining Claims \ref{GoodEvent} \ref{goodtimes}, and \ref{goodcase} we can bound the right hand side of the above by $O(n^{-50})$.
\end{proof}

\subsubsection{Proof of Claims}
\begin{proof}[Proof Of Claim \ref{goodgraph}]
Draw a random graph $H$ on $n$ vertices by choosing $\alpha{n\choose 2}$ edges uniformly at random.  Then we note that 
\begin{align*}
\E\sum_{\substack{S\subset H\\|S|\ge 2}} n^{2-2|\supp(S)|}=\sum_{\substack{S\subset{n\choose 2}\\2\le |S|\le r}} n^{2-2|\supp(S)|}\Pr(S\subset H)
\le \alpha^2n^2\sum_{i=3}^{r}n^{-2i}\sum_{|\supp(S)|=i} 1=O\left(\alpha^2 n^{-1}\right)
\end{align*}
So some $H$ must have at most the average value for this sum.
%
%
%
\end{proof}

We prove Claim \ref{GoodEvent} by noting that the formula for $\widehat{\k_\beta}(S)$ (a coefficient in the polynomial $\k_\beta$) is \emph{itself} a low degree polynomial, and therefore may be shown to have tight concentration by hypercontractivity. 

\begin{proof}[Proof Of Claim \ref{GoodEvent}]

Recall that from Section \ref{restrictions subsection} that
$$\widehat{\k_\beta}(e)=\sum_{T\subset H^c} \hat \k(e\cup T)\chi_{T}(\beta)$$

So $\widehat{\k_\beta}(e):\{0,1\}^{H^c}\to\mathbb{R}$ is a polynomial (in the functions $\chi_e$), and we can began by estimating its coefficients.
First we see that 
$$\E[\widehat{\k_\beta}(e)] =\widehat{\widehat{\k}_\beta(e)}(\varnothing)=\hat{\k}(e)$$

Also for any $T\subset \{0,1\}^{H^c}$ we know that $\hat{\k}(e\cup T)\neq 0$ only if $|\supp(e\cup T)|\le r$.  So we can compute:

\begin{align*}
Var_\beta( \widehat{\k_\beta}(e))&=\sum_{\substack{T\subset H^c\\T\neq \varnothing}} \hat{\k}(e\cup T)^2=\sum_{i=3}^r \sum_{\substack{T\subset H^c\\|\supp(T\cup e)|=i}} \hat \k(e\cup T)^2\\
&\le \sum_{i=3}^r\sum_{|\supp(T\cup e)|=i} \hat \k(e\cup T)^2\le \sum_{i=3}^r {n-2 \choose i-2}O(n^{2-2i})=O\left(\frac{1}{n^3}\right)
\end{align*}

Since $\widehat{\k_\beta}(e)$ has degree less than ${r\choose 2}$, an application of Theorem \ref{concentration hypercontractivity} gives us that for any $e\in H$ 
$$\Pr\left[\left|\widehat{\k_\beta}(e)-\hat \k(e)\right|\ge \frac{1}{n^{1.4}}\right]<\exp\left(-\Omega\left( n^{\frac{0.4}{r^2}}\right)\right)$$
Applying a union bound over all edges in $H$ completes the proof.
\end{proof}

\begin{proof}[Proof of Claim \ref{goodtimes}]

Again we use the decomposition 
$$\widehat{\k_\beta}(S)=\sum_{T\subset H^c} \hat \k(S\cup T)\chi_{T}(\beta)$$

and note that 
$$\E[\widehat{\k_\beta}(S)] =\widehat{\widehat{\k}_\beta(S)}(\varnothing)=\hat{\k}(S)$$

Let $|\supp(S)|=s$.  For any $T\subset \{0,1\}^{H^c}$ we know that $\hat{\k}(S\cup T)\neq 0$ if and only if $|\supp(S\cup T)|\le r$.  There are at most ${n-s\choose \ell-s}2^{{\ell\choose 2}}\le 2^{r^2}n^{\ell-s}$ choices of $T$ such that $s=|\supp(S\cup T)|=\ell$.  And further for each of these choices we know that $\hat \k(S\cup T)= \Theta(n^{1-\ell})$.  
Define the helper function
$$g:=\sum_{\substack{T\subset H^c\\|\supp(S\cup T)|>s}} \hat \k(S\cup T)\chi_T(\beta)$$
We can compute that
\begin{align*}
Var(g)&\le \sum_{\ell=s+1}^r \sum_{|\supp(S\cup T)=\ell} \left(\hat{\k}(S\cup T)\right)^2 \le \sum_{\ell=s+1}^{r} 2^{r^2}n^{\ell-s} \Theta(n^{2-2\ell})\\
&\le O(n^{2-2s-1})
\end{align*}
Further we can see that $g$ is a polynomial of degree at most ${r\choose2}$, and so by Theorem \ref{concentration hypercontractivity}
\begin{align*}
\Pr\left[|g|\ge n^{1-s+\frac{1}{4}}\right]
=\exp\left[-\Omega\left(n^{1/r^2}\right)\right]
\end{align*}

If $|g|<n^{1-s+1/4}$ then we can conclude that 
\begin{align*}
\hat \k(S)=\sum_{|\supp(S\cup T)|=s} \hat \k(S\cup T)\chi_T(\beta)+g(\beta)\le 2^{{s\choose 2}} \Theta(n^{1-s})+n^{1-s+\frac14}=O(n^{1-s})
\end{align*}
So for any $S\subset H$ we find that 
$|\widehat{\k_\beta}(S)|\le O(n^{1-s})$  with probability at least $1-\exp(-\Omega(n^{\frac{2}{r^2}}))$.  Taking a union bound over all such $S$ finishes the proof.
\end{proof}

\begin{proof}[Proof of Claim \ref{goodcase}]
Fix $\alpha=\frac{1}{t}$ and assume that $\beta\in A\cap B$.  Let $X$ and $Y$ be
\begin{align*}
X:=\sum_{e\in {[n]\choose 2}} \widehat{\k_\beta}(e) \chi_e&&Y:=\sum_{|S|\ge 2} \widehat{\k_\beta}(S) \chi_S
\end{align*}
then $\k_\beta=X+Y$, where $X$ is an independent sum, and $Y$ is small.  To apply Theorem \ref{mainchf} we set $\ell=99$ and compute the relevant parameters to be
\begin{enumerate}
\item $T:= \sum_{e\in H} \widehat{\k_\beta}(e)^2\ge  \sum_{e\in H} \hat{\k}(e)^2/4\ge \alpha{n\choose 2} \frac{1}{4n^2}\ge \frac{1}{t}$ where the last inequality uses the fact that $n^\tau\le t\le n^{\frac34}$.
\item $\displaystyle \eta^2=\|Y\|_2^2= \sum_{\substack{S\subset H\\|S|\ge 2}}\widehat{\k_\beta)}(S)^2\le \sum_{\substack{S\subset H\\|S|\ge 2}}O(n^{2-2|S|})=O(\alpha^2n^{-1})=O\left(\frac{1}{nt^2}\right)$
\item $\hat{\|}Y\hat{\|}_1=\sum_{S\subset H} |\widehat{\k_\beta}(S)|=O(n)$.
\item $\delta=\max_{e}(\widehat{\k_\beta}(e))=3\widehat{\k}(e)/2\le \frac{3}{n}$
\item \begin{align*}
\epsilon&=\exp\left(-\frac{t^2[T-\delta {r\choose 2}\ell]}{\pi^2}\right)=\exp\left(-\Omega\left(t^2\left[t^{-1}-\frac{\ell{r\choose 2}}{n}\right]\right)\right)=\exp\left(-\Omega\left(t\right)\right)=\exp\left(-\Omega\left(n^{\tau}\right)\right)
\end{align*}
\end{enumerate}

Where the third step above used the fact that $t=o(n)$, and the last used that $t\ge n^\tau$.  Given these settings of parameters we can plug into Theorem \ref{mainchf} and find that
$$\E[e^{it\k_\beta}]\le O\left(\epsilon n^\ell+  (n^{-{\frac12}})^{\ell+1}+ \exp\left(-\Omega(n^{1/r^2})\right)+n^{\frac{\ell+2}{4}}  \exp\left(-\Omega(n^{1/r^2})\right) \right)=O(n^{-50})$$
\end{proof}

\section{Bound for $|t|\in [n^{\frac12+2\tau},n^{\frac{r}{2}-\frac{5}{12}-\tau}]$}
\subsection{High Level Overview}
In this section we discuss how to bound the characteristic function  $\varphi_\k(t)$ for $t\in [n^{\frac12+2\tau},n^{\frac{r}{2}-\frac{5}{12}-\tau}]$.  The central trick is the decoupling
tool from Section \ref{decoupling section} combined with Theorem \ref{mainchf}.  The basic outline is that we will partition the edges of ${[n]\choose 2}$ into $k+1$ pieces, apply Lemma \ref{decoupling} to switch our attention from $\k$ to $\alpha(\k)$, and then further examine a random restriction
to edges on some subset $U_0\subset [n]$ of the vertices.  The restricted polynomial $\alpha(\k)_\Y$ will have its Fourier mass concentrated on degree 1 terms. We will then use Theorem \ref{mainchf} to bound the characteristic function of $\alpha(\k)_\Y$.
\subsection{Notation for Section and Setup}\label{setup section}
Partition the vertex set $[n]$ into $[n]=\cup_{i=0}^{k} U_i$.  Assume that for $i=1,2,\ldots,k$ all sets $U_i$ have a common size $u:=|U_i|$.  $U_0$ will contain all the other vertices, and we will always insist that $|U_0|\ge \frac{n}{k+1}$.  Once this partition has been made we can refer to a vertex in $U_i$ as having been colored with the color $i$.  Thus, tautologically, $U_i$ is the set of all vertices colored $i$.
We partition our edge variables into $k+1$ classes $B_0,B_1,\ldots, B_k$ by saying an edge $e=(u,v)$ is in $B_i$ if the largest color among the colors of its endpoints is color $i$.  Equivalently, if $e$ is an edge between a vertices in $U_i$ and $U_j$ respectively, then $e\in B_{\max(i,j)}$.

We define the $\alpha$ operator as per Section \ref{alpha def section} with respect to this partition.
For $i=1,2,\ldots,k$ let $B_i^0$ and $B_i^1$ denote two separate copies of the edges in $B_i$,
and let $Y_i^0$ and $Y_i^1$ denote independent identically drawn $p$-biased edge sets from $B_i^0$ and $B_i^1$ respectively.  Meanwhile, let $X$ denote the edge variables in $B_0={U_0\choose 2}$  Then by Lemma \ref{decoupling}
\begin{equation}\label{grestrictioneq}
|\varphi_\k(t)|^{2^k}\le \E_{\Y}\left| \E_X e^{it\sum_{\textbf{v}\in \{0,1\}^k}(-1)^{|v|} \k(X,Y^{v})}\right|=\E_\Y\left|\E_X[ e^{it\alpha( \k)_\Y(X)}]\right|
\end{equation}

%
%
We recall from Section \ref{decoupling} that $\alpha(\k)$ is a function in the variable set $X,Y_1^0,Y_1^1,\ldots,Y_k^1$.  However, the expectation we wish to bound above is only in terms of the variables in $X$.  We define our restricted functions, as per the notation in Section \ref{restrictions subsection}, to be
\begin{align}
g(X)&:=\alpha(\k)_\Y(X):=\alpha(\k)(X,\Y)=\sum_{T\subset B_0\cup \B}\hat{\k}(T)\alpha(\chi_T)(X,\Y)\nonumber\\
g_{S}(\Y)&:=g_{S}:=\widehat{\alpha(\k)_\Y}(S)=\sum_{\substack{T\subset \B\\T\rainbow}} \widehat{\k}(S\cup T) \alpha(\chi_T)(\Y)
\label{getransformeq}
\end{align}
Where in the last line $S\subset B_0$.  The rainbow condition in the subsequent sum is not technically necessary, but there to prune out the nonrainbow sets which have have a Fourier coefficient of 0.  

Thus, by equation \ref{grestrictioneq}, our goal for the rest of this section is to show that $\varphi_g(t):=\E_X[e^{itg}]$ is small with high probability over choice of restriction $\Y$.
To do this we will split the random variable $g$ into two pieces $h,d:\{0,1\}^{\B_0}\to \mathbb{R}$ where
$$h(X)=\sum_{e\in B_0}g_{e}(\Y)\chi_S(X)=g^{=1} \qquad\qquad d(X)=\sum_{|S|\ge 2} g_{S}(\Y)\chi_S(X)=g^{>1}$$

For $h$, we hope to show that its characteristic function is small, and for $d$ we will be interested in bounding $\hat{\|}d\hat{\|}_1$ and $\|d\|_2$ with an eye towards applying Theorem \ref{mainchf}.

\subsection{Main proofs of the section modulo lemmas}
The main result of this section is the following characteristic function bound:
\begin{lem}\label{puttinittogether}
Assume $\left(\frac nu \right)^{k/2}n^{k/2+\tau}\le t\le \left(\frac n u \right)^{k/2}n^{k/2+1/3-\tau}$ and $u=\Omega(n^{2\tau})$.
Then for any fixed natural number $\ell$
%
%
$$\E_{Y}|\varphi_g(t)|=O\left(n^{-\frac{\ell}{6}}\right)$$
\end{lem}

Before proving the lemma, we first state some claims, to be proven afterward, about the behavior of $g,~h,$ and $d$.

\begin{claim}\label{dsmallcor}
With probability $\ge 1-\exp(-\Omega(n^{-\tau/r^2})$ over sampling of $\Y$ we have that
\begin{align*}
\|d\|_2^2&=O\left(\left(\frac{u}{n}\right)^kn^{-k-1+2\tau}\right)\\
\hat{\|}d\hat{\|}_1&=O\left(\left(\frac{u}{n}\right)^{\frac{k}{2}}n^{-k/2+1+\tau}\right)
\end{align*}
\end{claim}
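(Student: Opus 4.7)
The plan is to compute $\E_\Y[\|d\|_2^2]$ and $\E_\Y[\hat{\|}d\hat{\|}_1]$ directly from the Fourier data of $\k$, and then upgrade each mean estimate to exponential concentration using the hypercontractivity inequalities of Section \ref{prelimsection}.

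\textbf{Expectations.} Parseval on $d$ as a function of $X$, together with the orthogonality of $\alpha(\chi_T)$ over rainbow $T$ developed in Section \ref{alpha def section}, gives
\[
\E_\Y[\|d\|_2^2] \;=\; 2^k\sum_{\substack{S\subset B_0\\|S|\ge 2}}\sum_{T\rainbow}\hat\k(S\cup T)^2.
\]
Writing $A=S\cup T$ and decomposing its support by color class $a_i=|\supp(A)\cap U_i|$, rainbowness forces $a_i\ge 1$ for $i\ge 1$, and $|S|\ge 2$ forces $a_0\ge 3$. Using $\hat\k(A)=\Theta(n^{1-|\supp(A)|})$ from Section \ref{KrProp}, together with the bound $O(n^{a_0}u^{a_1+\cdots+a_k})$ on the number of $A$'s with a given $(a_0,\ldots,a_k)$, I will verify that the minimal configuration $(3,1,\ldots,1)$ strictly dominates and produces $\E_\Y[\|d\|_2^2]=O((u/n)^k n^{-k-1})$. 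For the spectral one-norm I apply $\E_\Y|g_S|\le\sqrt{\E_\Y g_S^2}$, and the same enumeration---now with the outer sum over $S$ contributing $O(n^{s_0})$ for each $|\supp(S)|=s_0$---yields $\E_\Y[\hat{\|}d\hat{\|}_1]=O((u/n)^{k/2}n^{1-k/2})$.

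\textbf{Concentration of $\|d\|_2^2$.} Note $\|d\|_2^2=\sum_S g_S^2$ is a polynomial of degree at most $r(r-1)$ in $\Y$. Theorem \ref{moment hypercontractivity} applied to each $g_S$ (of degree at most $\binom{r}{2}$) gives $\E g_S^4=O\bigl((\E g_S^2)^2\bigr)$, and Cauchy--Schwarz in $\sum_{S,S'}\E[g_S^2 g_{S'}^2]$ then produces $\|\|d\|_2^2\|_2 = O(\E\|d\|_2^2)$. Applying Theorem \ref{concentration hypercontractivity} to the polynomial $\|d\|_2^2 - \E\|d\|_2^2$ at deviation $n^{2\tau}\,\E\|d\|_2^2$ gives the first half of the claim with failure probability of order $\exp(-\Omega(n^{c\tau/r^2}))$ for a universal $c>0$.

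\textbf{Concentration of $\hat{\|}d\hat{\|}_1$.} The absolute values preclude a direct polynomial treatment, so I bound high moments. By H\"older and moment hypercontractivity on each $g_S$,
\[
\E\bigl[\hat{\|}d\hat{\|}_1^{2q}\bigr]\;\le\;\Bigl(C_q\sum_S\|g_S\|_2\Bigr)^{2q},\qquad C_q = O\bigl((2q)^{\binom{r}{2}/2}\bigr).
\]
Choosing $q=\Theta(n^{2\tau/r^2})$ forces $C_q\le \tfrac12 n^\tau$, and Markov's inequality on this moment then yields $\Pr[\hat{\|}d\hat{\|}_1\ge n^\tau\sum_S\|g_S\|_2]\le 2^{-2q}=\exp(-\Omega(n^{2\tau/r^2}))$. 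Union-bounding over the two exceptional events gives both halves of the claim.

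\textbf{Main obstacle.} The chief technical difficulty is the combinatorial verification at the expectation step: checking that the minimum-support $A$ truly dominates, i.e.\ that any configuration with $a_0>3$ or $\sum_{i\ge 1}a_i>k$ loses at least a polynomial factor in $n$ relative to the leading term (leveraging $u\le n$). The concentration steps are otherwise routine, the only subtlety being to let $q$ grow slowly enough that the hypercontractive constants $(2q)^{O(r^2)}$ remain comfortably below $n^\tau$.
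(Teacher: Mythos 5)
Your proof is correct but takes a genuinely different route from the paper's. The paper first establishes, in Lemma~\ref{d2union}, a \emph{pointwise} bound $g_S(\Y)^2 \le (u/n)^k n^{-k-2|\supp(S)|+2+2\tau}$ holding simultaneously for all $S$ with high probability (via hypercontractive concentration of each degree-$\binom{r}{2}$ coefficient $g_S$ followed by a union bound over the $O(n^{r^2})$ monomials), and then obtains both norm estimates by a deterministic summation conditioned on that event. You instead compute the expectations of the two aggregates directly --- your formula $\E_\Y[\|d\|_2^2] = 2^k\sum_{S}\sum_{T\rainbow}\hat\k(S\cup T)^2$ is exactly right, and the configuration count $O(n^{a_0}u^{a_1+\cdots+a_k}n^{2-2\sum a_i})$ does degenerate monotonically away from $(3,1,\ldots,1)$ since each additional vertex costs a factor $\le n^{-1}$ --- and then concentrate each aggregate separately: a fourth-moment/Cauchy--Schwarz argument showing $\mathrm{Var}(\|d\|_2^2) = O((\E\|d\|_2^2)^2)$ followed by Theorem~\ref{concentration hypercontractivity} on the degree-$r(r-1)$ polynomial $\|d\|_2^2 - \E\|d\|_2^2$, and for the $1$-norm (which the absolute values prevent from being a polynomial) a Minkowski-in-$L^{2q}$-plus-hypercontractivity bound $\E[\hat\|d\hat\|_1^{2q}] \le (C_q\sum_S\|g_S\|_2)^{2q}$ with $q = \Theta(n^{c\tau/r^2})$ and Markov. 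Both routes rest on the same Fourier enumeration and land on the same $n^{\tau}$-slack and $\exp(-\Omega(n^{\Theta(\tau/r^2)}))$ failure probability. The paper's union bound is arguably cleaner and gives uniform per-coefficient control as a byproduct; your route avoids the union bound but requires a separate, slightly more delicate argument (the high-moment Markov trick) to handle the absolute values in $\hat\|d\hat\|_1$.
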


\begin{claim}\label{GeCor}
 Under the assumption that $u\ge n^{2\tau},$  $\tau<\frac12$, and $k\le r-2$.  Then there exists a constant $C$ such that for sufficiently large $n$ 
 $$\Pr\left[\left|\sum_{e\in B_0} g_e^2(\Y)\right|\le C\left(\frac{u}{n}\right)^kn^{-k}\right]\le \exp\left(-\Omega(n^{\tau/2r^2})\right)$$
\end{claim}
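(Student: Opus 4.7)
The strategy is to show that $W:=\sum_{e\in B_0} g_e^2(\Y)$ concentrates around its mean of order $(u/n)^kn^{-k}$, and to control the lower deviation by the hypercontractive concentration inequality (Theorem \ref{concentration hypercontractivity}).

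First I would compute $\E_\Y[W]$ explicitly. Using equation \ref{getransformeq} together with the orthogonality $\E_\Y[\alpha(\chi_T)\alpha(\chi_{T'})]=2^k\prod_i[T_i=T'_i]$ from Section \ref{alpha def section}, one obtains $\E_\Y[g_e^2]=2^k\sum_{T \rainbow}\hat{\k}(e\cup T)^2$. The dominant contribution comes from rainbow $T$ of smallest support: with $e\subset U_0$, the rainbow condition forces $|\supp(e\cup T)|\ge k+2$, and equality holds exactly when each $T_i$ is a single edge introducing one fresh vertex in $U_i$. There are $\Theta(u^k)$ such $T$, each contributing $\Theta(n^{-2k-2})$ to $\E[g_e^2]$ by the estimate $\hat{\k}(S)=\Theta(n^{1-|\supp(S)|})$ from Section \ref{KrProp}. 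Checking that rainbow $T$ with larger support contribute only a $O(n^{-1})$ fraction (the extra vertices cost a factor $n^j$ while the squared coefficient drops by $n^{-2j}$), and summing over the $\Theta(n^2)$ edges in $B_0$, one obtains $\E W=\Theta(u^kn^{-2k})=\Theta((u/n)^kn^{-k})$.

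Next I would bound $\mathrm{Var}(W)=\sum_{e,e'}\mathrm{Cov}(g_e^2,g_{e'}^2)$. The diagonal $e=e'$ contribution is handled by the $4$th moment estimate from Theorem \ref{moment hypercontractivity} applied to the degree-at-most-$({r\choose 2}-1)$ polynomial $g_e$, yielding $\sum_e\mathrm{Var}(g_e^2)=O(u^{2k}n^{-4k-2})=O(n^{-2}(\E W)^2)$. For the off-diagonal $e\ne e'$, one expands $g_e g_{e'}$ in the $\Y$-Fourier basis and uses Lemma \ref{alpha product} to see that only joint structures in which $e\cup T^{(1)}$ and $e'\cup T^{(2)}$ share extra vertices beyond what is forced by rainbowness can survive; a careful enumeration of those overlap patterns shows their total contribution is $O(u^{2k}n^{-4k-1})$. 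Hence $\mathrm{Var}(W)=O((\E W)^2/n)$, which under the hypothesis $u\ge n^{2\tau}$ gives in particular $\|W-\E W\|_2\le n^{-\tau}\E W$.

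Finally I would apply Theorem \ref{concentration hypercontractivity} to the polynomial $f:=W-\E W$ in the $\Y$ variables, which has degree at most $2({r\choose 2}-1)\le r^2$. Setting $t:=\tfrac{1}{2}\E W/\|f\|_2\ge \tfrac{1}{2}n^{\tau}$, the theorem yields
$$\Pr\Big(|W-\E W|\ge \tfrac{1}{2}\E W\Big)\le \lambda^{r^2}\exp\big(-\Omega(n^{2\tau/r^2})\big)\le \exp\big(-\Omega(n^{\tau/2r^2})\big),$$
and the claim follows by taking $C$ to be half the implicit constant in $\E W=\Theta((u/n)^kn^{-k})$. The main obstacle is the off-diagonal variance bookkeeping: one must enumerate which joint graph structures on $(e,T^{(1)})$ and $(e',T^{(2)})$ survive the cancellations forced by the $\alpha$-orthogonality captured in Lemma \ref{alpha product}, and verify that each surviving pattern carries an extra factor of $n^{-1}$ relative to the square of the main term identified in Step~1.
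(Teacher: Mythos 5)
Your overall strategy — compute the mean, bound the variance, and feed the ratio into the hypercontractive tail bound (Theorem \ref{concentration hypercontractivity}) — is the same as the paper's, and the mean computation matches Lemma \ref{GHZ}. However, there are two issues with the variance step that prevent this from being a complete proof.

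First, and most importantly, the asserted off-diagonal bound is too strong. You claim $\mathrm{Var}(W)=O((\E W)^2/n)$, but the paper's careful enumeration in Appendix~\ref{appendix z} (Lemmas \ref{CountingLemma} and \ref{CD}) yields only $\mathrm{Var}(Z)=O((u/n)^{2k-1}n^{-2k-1})=O((\E Z)^2/u)$. The dominant contribution comes from Fourier modes $V$ with $\supp(V)\cap U_0=\varnothing$, and the counting there produces $u^{2k-1}$, not $u^{2k}n^{-1}$; these differ by a factor of $n/u$, which can be as large as $n^{1-2\tau}$. Since $g_e=G_e+H_e$ with $Z=\sum G_e^2$ the leading part of $W$, the variance of $W$ is governed by $\mathrm{Var}(Z)$, so the $1/n$ saving you posit is not available. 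The argument is not fatally damaged, because $u\ge n^{2\tau}$ still makes $\E W/\sqrt{\mathrm{Var}(W)}=\Omega(\sqrt{u})=\Omega(n^\tau)$, exactly the threshold the paper uses; but you cannot get there from the variance bound you wrote down.

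Second, you explicitly defer the off-diagonal enumeration (``which joint graph structures survive the $\alpha$-orthogonality cancellations''), but this is precisely the nontrivial content of the claim. The paper sidesteps the difficulty by first decomposing $g_e=G_e+H_e$ with $G_e$ supported only on the extremal family $\mathcal{F}_e$, reducing to $Z=\sum_e G_e^2$ via Lemma \ref{ZtoGBound} (a Cauchy--Schwarz plus hypercontractivity estimate on the cross term $G_eH_e$). That restriction forces the surviving Fourier modes of $Z$ to carry a rigid rainbow structure, which is what makes the case analysis in Lemma \ref{CountingLemma} tractable. Working directly with $W=\sum g_e^2$, as you propose, means you must additionally track all the lower-order terms of $g_e$ in the cross-correlation; that is harder, not easier, and you would need to verify that those extra modes do not worsen the variance. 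Until the enumeration is actually carried out with the correct exponent, the proposal has a genuine gap at its central step.
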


\begin{claim}\label{geconcentration}
For any $e\in B_0$ there exists a $C>0$ such that for all sufficiently large $n$ 
$$\Pr\left[\left|g_e(\Y)\right|\ge Cn^{-\frac k2-1+\tau}\left(\frac{u}{n}\right)^{\frac{k}{2}}\right]\le \exp\left(-\Omega\left(-n^{\tau/r^2}\right)\right)$$
\end{claim}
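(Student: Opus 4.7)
The plan is a standard variance-plus-hypercontractivity argument applied to $g_e$ viewed as a polynomial in the Bernoulli variables $\Y$. From the expansion in equation \ref{getransformeq} and the formula for $\alpha(\chi_T)$ derived in Section \ref{decoupling section}, $g_e(\Y)$ is a polynomial of degree at most $\binom{r}{2}-1$ in the $\chi$-basis of $\Y$ (since every surviving $T$ with nonzero $\hat{\k}(e\cup T)$ has $|\supp(e\cup T)|\le r$), and it has mean zero because each factor $\chi_{T_i}(Y_i^0)-\chi_{T_i}(Y_i^1)$ has mean zero whenever $T_i\ne\emptyset$ and only rainbow $T$ survive.

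The first step is to compute $\|g_e\|_2^2$. Using the orthogonality relations established in Section \ref{decoupling section} (distinct rainbow $T$ contribute orthogonally and each has $\|\alpha(\chi_T)\|_2^2 = 2^k$),
\begin{equation*}
\|g_e\|_2^2 \;=\; 2^k\sum_{\substack{T=T_1\cup\cdots\cup T_k\\ T_i\subset B_i,\ T_i\neq\emptyset}} \hat{\k}(e\cup T)^2.
\end{equation*}
I would stratify by $s:=|\supp(e\cup T)|$. Since $e$ contributes two vertices in $U_0$ and each nonempty $T_i$ forces at least one vertex in $U_i$, we get $s\ge k+2$. For fixed $s\in[k+2,r]$, the number of admissible supports is $O(n^{s-k-2}u^k)$ (the dominant term comes from picking one mandatory vertex in each $U_i$ and placing the remaining $s-k-2$ vertices anywhere), and a constant number of edge-subsets on each support. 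Combined with $\hat{\k}(e\cup T)^2=\Theta(n^{2-2s})$ from Section \ref{KrProp}, the $s$-th contribution is $O(u^k n^{-s-k})$, dominated at $s=k+2$, giving
\begin{equation*}
\|g_e\|_2^2 \;=\; O\!\left((u/n)^k\, n^{-k-2}\right), \qquad \|g_e\|_2 \;=\; O\!\left(n^{-k/2-1}(u/n)^{k/2}\right).
\end{equation*}

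The second step is to apply Theorem \ref{concentration hypercontractivity} to the degree-$\binom{r}{2}$ polynomial $g_e$. For $t_0\ge (2e/\lambda)^{\binom{r}{2}/2}$ one gets $\Pr[|g_e(\Y)|\ge t_0\|g_e\|_2]\le \exp(-\Omega(t_0^{2/\binom{r}{2}}))$. Choosing $t_0 = c\,n^\tau$ for a sufficiently small constant $c>0$ makes $t_0\|g_e\|_2 \le C n^{-k/2-1+\tau}(u/n)^{k/2}$ for the $C$ of the claim, and the tail estimate becomes $\exp(-\Omega(n^{2\tau/\binom{r}{2}}))\le\exp(-\Omega(n^{\tau/r^2}))$, exactly as required.

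The main technical obstacle, though not genuinely hard, is the combinatorial support count with the forced rainbow condition: one must be careful that the color constraint in each $B_i$ really forces a new vertex in $U_i$ (it does, via the max-color convention that defines $B_i$), and that the dominant term in the count over $v_0\in[2,s-k]$ comes from $v_0=s-k$ rather than $v_0=2$. Once that count is in hand, mean-zero and hypercontractivity deliver the concentration bound immediately from the tools already assembled in Sections \ref{decoupling section} and \ref{prelimsection}.
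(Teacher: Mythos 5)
Your proposal is correct and follows essentially the same path as the paper: the paper's proof cites Lemma~\ref{GHZ} for $\|g_e\|_2^2=\Theta\left((u/n)^k n^{-k-2}\right)$, notes that $g_e$ is a mean-zero polynomial of bounded degree in $\Y$, and invokes Theorem~\ref{concentration hypercontractivity}, exactly as you do. The only difference is that you re-derive the norm bound inline by stratifying over $s=|\supp(e\cup T)|$, which is precisely the computation underlying Lemma~\ref{GHZ}.
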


With these ingredients we can prove  Lemma \ref{puttinittogether}

\begin{proof}
Let $A$ be the event that $\|d\|_2^2$ and $\hat\|d\hat\|_1$ are small as promised by Claim \ref{dsmallcor}, that $g_e$ is small for all $e\in B_0$ as promised by Claim \ref{geconcentration}, and $\sum_{e\in B_0}g_e^2$ is large as promised by Claim \ref{GeCor}.
By those results we know that $\Pr(A)\ge 1-\exp(-\Omega(n^{-\tau/2r^2})$.  

We apply Theorem \ref{mainchf} to $g=h+d$.  
Conditioning on the event $A$ we can estimate the relevant parameters of that theorem to be

\begin{enumerate}
\item $T:= \sum_{e\in {U_0\choose 2}} g_e^2\ge \Omega\left(\left(\frac u n \right)^kn^{-k}\right)$ 
\item $\eta^2=\|d\|_2^2\le O\left(\left(\frac u n \right)^kn^{-k-1+2\tau}\right)$
\item $\hat{\|}d\hat{\|}_1=O\left(\left(\frac u n\right)^{\frac k 2}n^{-k/2+1+\tau}\right)$
\item $\delta^2=\max_{e}(g_e^2)=O\left(\left(\frac u n \right)^kn^{-k-2+2\tau}\right)$
\item 
\begin{align*}
\epsilon&=\exp\left(-\frac{2t^2}{\pi^2}\left[T-\delta {r\choose 2}\ell\right]\right)=\exp\left(-\Omega\left[\left(\frac u n \right)^kt^2\left(n^{-k}-\ell{r\choose 2}n^{-k-2+2\tau}\right)\right]\right)\\
&\exp\left(-\Omega\left(\left(\frac u n \right)^kt^2n^{-k}\right)\right)
\end{align*}
\end{enumerate}


So for any fixed $\ell$ such that $t\le (2e)^{\ell/2}\lambda^{-\ell/2}\eta^{-1}=O((u/n)^{-k}n^{k+1-2\tau})$ Theorem \ref{mainchf} tells us

\begin{align*}
|\varphi_g(t)-\varphi_{h}(t)|&\le \ell \epsilon\left(1+\left(t\hat{\|}d\hat{\|}_1\right)^\ell \right)+\frac{(t\eta)^{(\ell+1)}}{(\ell+1)!} \ell^{\frac{(\ell+1){r\choose 2}}{2}}\lambda^{{r\choose 2}\left(\frac{1-\ell}{2}\right)}
+\lambda^{{r\choose 2}}
\exp\left[-\frac{{r\choose 2}\lambda}{2e}|t\eta|^{-\frac{2}{{r\choose2}}}\right]\\
&\qquad+\left|t\eta\right|^{(\ell+1)/2}\lambda^{\frac{3{r\choose 2}-\ell}{4}}
\left(\ell+1\right)\exp\left[-\frac{{r\choose 2}\lambda}{4e}\left|t\eta\right|^{-\frac{2}{{r\choose 2}}}\right]\\
&=O\left(\left(\frac u n \right)^{k\ell/2}t^\ell n^{-\ell k/2+\ell+\ell \epsilon}\exp\left(-\Omega\left(\left(\frac u n \right)^kt^2n^{-k}\right)\right)+(t\eta)^{\ell+1}\right.\\
&\left.\qquad+\exp\left(-\Omega\left[(t\eta)^{-\frac{2}{{r\choose 2}}}\right]\right)+(t\eta)^{\frac{\ell+1}{2}}\exp\left(-\Omega\left[(t\eta)^{-\frac{2}{{r\choose 2}}}\right]\right)\right)
\end{align*}
Assuming that $t\ge n^{k+\tau}u^{-k/2}$
we find that the first term in the right hand side above is bounded above by $\exp\left(-\Omega(n^{2\tau})\right)$.
Additionally, assuming  $t\le u^{-k/2}n^{k+\frac13+\tau}$ we have that $t\eta=O( n^{-1/6})$, and so the subsequent terms in the above expansion
can be bounded above by $O\left(n^{-\ell/6}+\exp(-n^{-1/2r^2})\right)$.

Therefore, whenever the event $A$ occurs, we have $|\varphi_g(t)-\varphi_{g^{=1}}(t)|\le O(n^{-\ell/6})$.
Combining the fact that $\Pr(A^c)=\exp\left(\Omega(n^{-\tau/r^2})\right)$
with the observation that $|\varphi_g(t)-\varphi_{h}(t)|\le 2$, it follows that
$$\E_{\Y}|\varphi_g(t)-\varphi_{h}(t)|\le O(n^{-\ell/6})+2\Pr(A^c)=O(n^{-\ell/6})$$

To finish the proof of the lemma, we just have to bound the characteristic function of $h$.  But $h$ is a sum of independent $p$ biased Bernoulli random variables.  So we can compute (again, conditioning on $A$), that
\begin{align*}
|\varphi_{h}(t)|&=\E\left[e^{it\sum_{e\in B_0} itg_e\chi_e}\right]=\prod_{e\in B_0}\left|\E[e^{itg_e\chi_e}]\right|\le \exp\left(-\frac{4}{\pi^2}t^2\sum g_e^2\right)\le \exp(-\Omega(t^2T))\\
&\le \exp(-\Omega(n^{2\tau}))
\end{align*}
Where the first inequality uses Lemma \ref{bernoulli} combined with the assumption on the event $A$ (from Claim \ref{geconcentration}) that $|g_et|=o(1)$

\end{proof}
We now apply this Lemma for appropriate choices of $k,\ell,$ and $u$ to bound $\varphi_\k(t)$ in a form suitable for use in the proof of our main
local limit theorem in Section \ref{mainsection}
\begin{cor}\label{midchfbound}
Assume $\tau<\frac{1}{12}$.  For any $1\le k\le r-2$, and $n^{\frac k2+2\tau}\le t\le n^{\frac k 2+\frac7{12}-2\tau}$ we have $|\varphi_{\k}(t)|\le O(n^{-r^2})$.
\end{cor}
\begin{proof}
We proceed in two cases.  In the first, set $u=n/(k+1)$.  Then Lemma \ref{puttinittogether} tells us that for some constants $C_1,C_2$ we have
whenever $ C_1n^{k/2+\tau}\le t\le C_2n^{k/2+1/3-\tau}$ then $\E|\varphi_{g}(t)|=O(n^{-\ell/6})$.  Furthermore,
we know from Lemma \ref{decoupling} that $|\varphi_\k(t)|^{2^{k}}\le \E_{\Y}|\varphi_g(t)|$.  So choosing $\ell=2^{k+5}r^2$ we find that $|\varphi_\k(t)|=O(n^{-r^2})$.

In the second case set $u=n^{1-1/2k}$.  Lemma \ref{puttinittogether} along with the same choice of $\ell=2^{k+5}r^2$ will tell us that for $n^{\frac{k}{2}+\frac{1}{4}+\tau}\le  t\le n^{\frac{k}{2}+7/12-\tau}$ we have $|\varphi_\k(t)|=O(n^{-r^2})$.  So long as $\tau<\frac1{12}$ these intervals will overlap (at least in the limit). 

\end{proof}
\begin{cor}\label{midchfbound}
If $\tau<\frac{1}{12}$, then for any $t\in [n^{\frac{1}{2}+2\tau}, n^{\frac{r}{2}-\frac{5}{12}-2\tau}]$ we have $|\varphi_{\k}(t)|\le O(n^{-r^2})$.
\end{cor}
\begin{proof}
This follows by taking the union of the bounds in the above corollary for $1\le k\le r-2$.
\end{proof}

\subsection{Proof of Claim \ref{GeCor}} \label{GeCor Section}

\subsubsection{Showing that $\sum g_e^2$ is large}
In this section, we will show that, with high probability over $\Y$, $\sum_{e\in B_0} g_e^2=\|h\|_2^2$ is large.   To do this we first separate out the family
$\mathcal{F}_e$ of subsets of $\B$ which contain most of the Fourier weight of $g_e(\Y)$.
\begin{define} \label{FeDef}
$$\mathcal{F}_e=\{S\subset \B\st |\supp(S-e)|=k,~S\rainbow\}$$
\end{define}

We then carve $\sum g_e^2$ into pieces as follows.  Let
\begin{align*}
G_e(\Y)&:=\sum_{S\in \mathcal{F}_e} \widehat{g_e}(S)\alpha(\chi_S)(\Y)\\
H_e(\Y)&:=\sum_{S\notin \mathcal{F}_e} \widehat{g_e}(S)\alpha(\chi_S)(\Y)\\
Z(\Y)&=\sum_{e\in B_0} G_e^2=\sum_{e} (g_e-H_e)^2
\end{align*}
So in particular $G_e+H_e=g_e$, and $G_e$ is the main term while $H_e$ is best thought of as an error term.
We now embark on proving the following estimates for $G_e$ and $H_e$ respectively
\begin{lem}\label{GHZ}
  Assume that $1\le k\le r-2$.  Then
\begin{align*}
\|G_e\|_2^2&=\Theta\left(n^{-k-2}\left(\frac{u}{n}\right)^k\right)\\
\|H_e\|_2^2&=O\left(n^{-k-3}\left(\frac{u}{n}\right)^k\right)\\
\|g_e\|_2^2&=\Theta\left(n^{-k-2}\left(\frac{u}{n}\right)^k\right)
\end{align*}
\end{lem}
\begin{proof}
The third claim follows immediately from the previous two.  For $H_e$ we recall equation \ref{getransformeq} and compute.
\begin{align*}
2^{-k}\|H_e\|_2^2=2^{-k}\sum_{S\in \mathcal{F}_e^c} \widehat{g_e}(S)^2&=\sum_{t=k+3}^{r} \sum_{\substack{|V\cup e|=t}}\sum_{\substack{S\subset{V+e\choose 2}\\S\rainbow}} \hat{\k}(S\cup e)^2\le\sum_{t=k+3}^{r} \sum_{|V|=t}\sum_{\substack{S\subset{V+e\choose 2}\\S\rainbow}} 
O\left(n^{-2t+2}\right)\\
&\le\sum_{t=k+3}^r u^kn^{t-k-2}2^{t\choose 2} O(n^{-2t+2})\le O\left(n^{-k-3}\left(\frac{u}{n}\right)^k\right)
\end{align*}
Where the second inequality follows from noting that there are at most $u^{k}n^{|\supp(S)|-k-2}$ ways to choose the support of a rainbow set of edges,
and then at most $2^{|\supp(S)|\choose 2}$ ways to pick edges with that support.

Meanwhile for $G_e$ 
\begin{align*}
2^{-k}\E[G_e(\Y)^2]&=\sum_{S\in \mathcal{F}_e} \hat{\k}(S\cup e)^2= \sum_{|V|=k+2}\sum_{\substack{\supp(S\cup e)=V\\S\rainbow}} \hat{\k}(S\cup e)^2= \sum_{|V|=k+2}\sum_{\substack{\supp(S\cup e)=V\\S\rainbow}} C_Sn^{-2k-2}\\
&\ge \left(\prod_{i=1}^k |U_i|\right)C_Sn^{-2k-2}\\
&=\Theta\left(n^{-k-2}\left(\frac{u}{n}\right)^k\right)
\end{align*}
Where $C_S$ is some constant depending on $|S|$, which always lies in $[\lambda^{r^2},1]$ and can be read off of the Fourier expansion of $\k$ in Section \ref{KrProp} .  Note that we used the assumption that $k+2\le r$ to ensure that the sums above were nonempty.
\end{proof}

Meanwhile, for each $e\in U$ we know that $|G_e-g_e|=|H_e|$, and so
 $g_e^2\ge G_e^2-2|G_eH_e|$.  But $H_e$ is relatively small, so by Cauchy Schwarz
$$\|G_eH_e\|_2^2=\E[G_e^2H_e^2]\le \sqrt{\E[G_e^4]\E[H_e^4]}=\|G_e\|_4^2\|H_e\|_4^2$$
Since $\deg(G_e),\deg(H_e)\le {r\choose 2}$  Theorem \ref{moment hypercontractivity} tells us that
$$\|G_e\|_4^4\|H_e\|_4^4\le (3)^{2r^2}\lambda^{2r^2-4}\|H_e\|_2^4\|G_e\|_2^4=\Theta\left((u/n)^{4k}n^{-4k-10}\right)$$
This in turn implies that $\|G_eH_e\|_2=\Theta(u^{k}n^{-2k-2.5})$.  So it follows from Theorem \ref{concentration hypercontractivity} that $|G_eH_e|\le u^{k}n^{-2.5+\tau}$ with probability $1-\exp(\Omega(-n^{\tau/2r^2})$ .  Therefore with high probability
$$\sum_e g_e^2\ge \sum_{e\in B_0} \left[G_e^2-O(u^{k}n^{-2k-2.5+\tau})\right]=Z-O(u^{k}n^{-2k-1/2+\tau})$$
 We restate this as a lemma.
 \begin{lem}\label{ZtoGBound}
 With probability at least $1-\exp(\Omega(n^{-\tau/2r^2})$ over choice of $\Y$, we have that
 $$\sum_e g_e^2\ge \sum_{e\in B_0} \left[G_e^2-O(u^{k}n^{-2k-2.5+\tau})\right]=Z-O(u^{k}n^{-2k-\tau+1/2})$$
 \end{lem}
 
 To finish our argument, we require the fact that $Z$ is large with high probability.  This will follow immediately from observing that
 $Z$ is a fixed degree polynomial and computing the variance of $Z$.  Unfortunately, computing this variance is cumbersome, and so
 the proof of the following lemma is in Appendix \ref{appendix z}
 \begin{lem}\label{GeBound}
 Let $Z=\sum_{e\in B_0} G_e^2$.  Assume that $u\ge n^{2\tau}$  and $1\le k\le r-2$.  Then there exists a constant $C$ such that for sufficiently large $n$, $\Pr\left[|Z|\le C\left(\frac{u}{n}\right)^kn^{-k}\right]\le \exp\left(\Omega(n^{-\tau/r^2})\right)$
 \end{lem}
 
 We are now in a position to prove Claim \ref{GeCor}

\begin{repclaim}{GeCor}
 Under the assumption that $u\ge n^{2\tau},$  $\tau<\frac12$, and $k\le r-2$.  Then there exists a constant $C$ such that for sufficiently large $n$ 
 $$\Pr\left[\left|\sum_{e\in B_0} g_e^2(\Y)\right|\le C\left(\frac{u}{n}\right)^kn^{-k}\right]\le \exp\left(\Omega(n^{-\tau/2r^2})\right)$$
\end{repclaim}
\begin{proof}
Lemma \ref{GeBound} above implies that  for some $C_1>0$ we have $Z\ge C_1u^kn^{-2k}$ with probability  $1-\exp\left(\Omega(n^{-\tau/r^2})\right)$.
Meanwhile Lemma \ref{ZtoGBound} implies that $\sum_{e\in B_0} g_e^2\ge Z-O(u^kn^{-2k-\tau+\frac{1}{2}})$ with probability  $1-\exp\left(\Omega(n^{-\tau/2r^2})\right)$.  Combining these inequalities yields the corollary.
\end{proof}

\subsection{Proof of Claim \ref{geconcentration}}

\begin{repclaim}{geconcentration}
For any $e\in B_0$ there exists a $C>0$ such that for all sufficiently large $n$ 
$$\Pr\left[\left|g_e(\Y)\right|\ge Cn^{-\frac k2-1+\tau}\left(\frac{u}{n}\right)^{\frac{k}{2}}\right]\le \exp\left(-\Theta\left(n^{-\frac{\tau}{r^2}}\right)\right)$$
\end{repclaim}
\begin{proof}
We computed in Lemma \ref{GHZ} that $\|g_e\|_2=O(n^{-k-2}(u/n)^k)$.  We also know that $\E[g_e(\Y)]=\hat{g_e}(\varnothing)=0$.  $g_e$ is a polynomial in $\Y$ of degree at most $r\choose 2$ so the 
result then follows from Lemma \ref{concentration hypercontractivity}.
\end{proof}

\subsection{Proof of Claim \ref{dsmallcor}}

First, we compute a bound on the Fourier coefficients $g_S(\Y)=\widehat{\alpha(\k)_\Y}(S)$.
\begin{lem}\label{d2union}
For some $C>0$ and for all sufficienetly large $n$ 
$$g_{S}(\Y)^2\le \left(\frac{u}{n}\right)^kn^{-k-2|\supp(S)|+2+2\tau}$$
 holds for \textit{all} $S\subset B_0$ with probability $1-\exp(-\Theta(n^{\tau/r^2}))$.
\end{lem}
\begin{proof}
For any $S$, we note that $\E_{\Y}[g_S(\Y)]=0$.  If $|S|>r-k$ then $g_S(\Y)$ is identically 0.  If $|S|\le r-k$ then we compute this quantity to have variance
\begin{align*}
2^{-k}\E[g_{S}(\Y)^2]&=\sum_{\substack{T\subset \B\\T\rainbow}}  \hat{\k}(S\cup T)^2=\sum_{t=k+|\supp(S)|}^{r} \sum_{|V|=t}\sum_{\substack{\supp(S\cup T)=V\\T\rainbow}} \hat{\k}(S\cup T)^2\\
&\le\sum_{t=k+|\supp(S)|}^{r} \sum_{|V|=t}\sum_{\substack{\supp(S\cup T)=V\\T\rainbow}} O\left(n^{-2t+2}\right)\\
&\le\sum_{t=k+|\supp(S)|}^{r}  u^kn^{t-k-|\supp(S)|}2^{t\choose 2}O\left(n^{-2t+2}\right)=O\left(\left(\frac{u}{n}\right)^kn^{-k-2|\supp(S)|+2}\right)
\end{align*}
We also know that $g_{S}(\Y)$ is a polyomial of degree at most ${r\choose 2}$ and $\E[g_{\B|S}(\Y)]=0$.  By Theorem \ref{concentration hypercontractivity},  for $n$ sufficiently large we have that $\Pr[|g_{S}|\ge \left(\frac{u}{n}\right)^{k/2}n^{-k/2-|\supp(S)|+1+\tau}]\le \exp(-\Omega(n^{\tau/r^2}))$.  Since there are at most $O(n^{r^2})$ possible monomials $S$
for which $g_S(\Y)\not \equiv 0 $ a union bound finishes the proof of the lemma.
\end{proof}
\begin{repclaim}{dsmallcor}
With probability $1-\exp(-\Omega(n^{-\tau/r^2})$ we have that  both
\begin{align*}
\|d\|_2^2&=O\left(\left(\frac{u}{n}\right)^kn^{-k-1+2\tau}\right)\\
\hat{\|}d\hat{\|}_1&=O\left(\left(\frac{u}{n}\right)^{\frac{k}{2}}n^{-k/2+1+\tau}\right)
\end{align*}
\end{repclaim}
\begin{proof}
Both of these statements follow from a computation using the bound given in Lemma \ref{d2union}.  Throughout we condition on the assumption that all of the Fourier coeficients of $d$ are as small as promised by Lemma \ref{d2union}.  First we bound  the 2-norm of $d$ by
\begin{align*}
\|d\|_2^2&=\sum_{\substack{S\subset B_0\\|S|\ge 2}} g_S(\Y)^2=\sum_{t=3}^{r-k}\sum_{|\supp(S)|=t} g_S(\Y)^2\le\left(\frac{u}{n}\right)^k\sum_{t=3}^{r-k}\sum_{|\supp(S)|=t} n^{-k-2t+2+2\tau}\\
&\le\left(\frac{u}{n}\right)^k\sum_{t=3}^{r-k}{n\choose t}2^tO\left(n^{-k-2t+2+2\tau}\right)=O\left(\left(\frac{u}{n}\right)^kn^{-k-1+2\tau}\right)
\end{align*}
Then we bound the spectral 1 norm by
\begin{align*}
\hat{\|}d\hat{\|}_1&=\sum_{S\subset B_0}| \hat{d}(S)|=\sum_{t=3}^{r-k} \sum_{|\supp(S)|=t} O\left(\left(\frac{u}{n}\right)^{k/2}n^{-k/2-t+1+\tau}\right)=O\left(\left(\frac{u}{n}\right)^{k/2}n^{-k/2+1+\tau}\right)
\end{align*}
\end{proof}

\section{Bound for $|t|\in[ n^{\frac{r}{2}-\frac16-2\tau},~n^{r-1-r\tau}]$}
Here we repeat the same setup and notation from Section \ref{setup section}, but now we focus exclusively on the special case when $k=r-2$.
The function $g=\alpha(\k)_\Y$ exhibits some different behavior in this case. 

First, let's look at what happens when $k=r-1$.  Then any rainbow set $T\subset \B$ contains vertices of from $U_1,U_2,\ldots,U_k$, and so in particular has at least $r-1$ vertices not in $U_0$.  Therefore for any nonempty $S\subset {U_0\choose 2}$ we have $|\supp(S\cup T)|\ge r+1$.  But recall that $\k$ is supported on sets of edges spanning at most $r$ vertices.  Therefore $\alpha(\k)$ does not depend on the edges in $B_0$ at all, and so $g(X)$ is a constant.

But if $k=r-2$, then for any rainbow $T\subset \B$ and $S\subset{U_0\choose 2}$ if $|S|>1$ we have
$|\supp(S\cup T)|\ge k+3=r+1$.  Therefore $\hat{g}(S)=0$ when $S$ is not just the set of a singleton edge.  In particular for any edge $e\in {U_0\choose 2}$ if $T$ is rainbow, and $|\supp(e\cup T)|\le r$ then it follows that $|\supp(T)-e|=k=r-2$.  Recallling definition \ref{FeDef} that
$\mathcal{F}_e=\{S\subset \B\st |\supp(S)-e|=k,~S\mbox{ rainbow}\}$, we can restate our observation as the following lemma.
\begin{lem}
Assume $k=r-2$.  Then we have $d\equiv 0$.  Further, for $e\in {U_0\choose 2}$ and $T\subset \B,~T\notin \mathcal{F}_e$, it follows that $\widehat{g_{e}}(T)\equiv 0$.  That is $g_e=\sum_{T\in \mathcal{F}_e} \widehat{g_{e}}(T)\chi_T(\Y)$.
\end{lem}

This implies that for any choice of $\Y\in \{0,1\}^{\B}$ we have $\alpha(\k)_\Y=g$ is a a degree 1 polynomial in independent Bernoulli random variables.
Because of this, we can bound the characteristic function of $g$ more directly.  Additionally, our analysis of $\sum g_e^2=\|g\|_2^2$ also becomes easier.

\begin{lem}\label{lowhighchfbound}
For $t\in [ (r-1)^{r/2-1}n^{r/2-1+\tau/2},~n^{r-2-r\tau}]$, we have $|\E_{\Y}[\varphi_g(t)]|=\exp(-\Omega(n^{\tau/2r^2}))$.
\end{lem}
\begin{proof}
Set $u=n^{2+\tau/(r-2)}t^{-2/(r-2)}$, and therefore $(u/n)^{r-2}=n^{r-2+\tau}/t^2$.  First, we check that this is a feasible choice of $u$ for Claims \ref{GeCor} and \ref{geconcentration}, that is $n^{2\tau}\le u\le n/(r-1)$.  
For the lower bound, we find the requirement 
$$n^{2+\tau/(r-2)}t^{-2/(r-2)}\ge n^{2\tau}\implies t \le n^{r-2-r\tau}$$
For the upper bound we need
$$n^{2+\tau/(r-2)}t^{-2/(r-2)}\le \frac{n}{r-1}\implies t \ge (r-1)^{r/2-1}n^{r/2-1+\tau/2}$$
And these are exactly the hypotheses on $t$.
Let $A$ be the event that $\sum_{e\in B_0} g_e^2\ge (u/n)^{r-2}n^{-r+2}=n^{\tau}/t^2$, and $g_e^2\le (u/n)^{r-2}n^{-r+\tau}=n^{-2+2\tau}/t^2$ for all $e\in B_0$.  By Claims \ref{GeCor} and \ref{geconcentration} we have $\Pr(A^c)\le \exp(-\Omega(n^{\tau/2r^2}))$.

Given that event $A$ occurs we have that $|g_e t|\le n^{-1+\tau}<\sqrt{p(1-p)}\pi$.  Therefore we can use Lemma \ref{bernoulli} to bound 

$$|\varphi_g(t)|=|\E[e^{it\sum g_e\chi_e}]|=\prod_{e\in B_0} \E[e^{itg_e\chi_e}]\le e^{-\frac{2}{\pi^2}t^2\sum g_e^2}\le \exp(-\Omega(n^{\tau/r^2}))$$.

To complete the proof of the lemma, we combine this inequality with the bounds that $\Pr(A^c)\le \exp(-\Omega(n^{-\tau/2r^2}))$ and $|e^{ix}|\le 1$.
\end{proof}

Setting $u$ slightly smaller yields a result more suited to slightly larger values of $t$
\begin{lem} \label{highmidchfbound}
For $t\in [ (r-1)^{r/2-1}n^{r/2-3\tau/2},~n^{r-1-2(r-2)\tau}]$, we have $|\E_{\Y}[\varphi_g(t)]|=\exp(-\Omega(n^{\tau/2r^2}))$.
\end{lem}
The proof is more or less the same as the above, but we include it here as subtle errors would be easy to make.
\begin{proof}
Set $u=n^{2+\frac{2-3\tau}{r-2}}t^{-2/(r-2)}$, and therefore $(u/n)^{r-2}=n^{r-3\tau}/t^2$.  First, we check that this is a feasible choice of $u$ for Claims \ref{GeCor} and \ref{geconcentration}, that is $n^{2\tau}\le u\le n/(r-1)$.  
For the lower bound, we find the requirement 
$$n^{2+\frac{2-3\tau}{r-2}}t^{-2/(r-2)}\ge n^{2\tau}\implies t \le n^{r-1-2(r-2)\tau}$$
For the upper bound we need
$$n^{2+\frac{2-3\tau}{r-2}}t^{-2/(r-2)}\le \frac{n}{r-1}\implies t \ge (r-1)^{r/2-1}n^{r/2-3\tau/2}$$
And these are exactly the hypotheses on $t$.
Let $A$ be the event that $\sum_{e\in B_0} g_e^2\ge (u/n)^{r-2}n^{-r+2}=n^{2-3\tau}/t^2$, and $g_e^2\le (u/n)^{r-2}n^{-r+2\tau}=n^{-\tau}/t^2$ for all $e\in B_0$.  By Claims \ref{GeCor} and \ref{geconcentration} respectively we have $\Pr(A^c)\le \exp(-n^{\tau/2r^2})$.

Given that event $A$ occurs we have that, $|g_e t|\le n^{-\tau/2}<\sqrt{p(1-p)}\pi$.  Therefore we can use Lemma \ref{bernoulli} to bound (again
conditional on the event $A$ occuring)

$$|\varphi_g(t)|=|\E[e^{it\sum g_e\chi_e}]|=\prod_{e\in B_0} \E[e^{itg_e\chi_e}]\le e^{-\frac{4}{\pi^2}t^2\sum g_e^2}\le \exp(-\Omega(n^{2-3\tau}))$$.

To complete the proof of the lemma, just use the fact the bound $\Pr(A^c)\le \exp(-\Omega(n^{-\tau/2r^2}))$ and the fact that $|e^{ix}|\le 1$.
\end{proof}

Combining Lemmas \ref{lowhighchfbound} and \ref{highmidchfbound} with Lemma \ref{decoupling} yields the following corollary
\begin{cor}\label{midhighchfbound}
Assume $0<\tau<\frac1{2r}$.  For $t\in [n^{r/2-1+\tau},~n^{r-1-2(r-2)\tau}]$ we have $|\varphi_\k(t)|\le \exp(-\Omega(n^{\tau/2r^2}))$.
\end{cor}

\section{Bound for large $t$} \label{larget section}
For large $t$, an even more extreme application of Lemma \ref{decoupling} is needed.  To do this, we take the following partition of the edge random variables.
Partition the vertex set $[n]$ into $\lfloor \frac{n}{r}\rfloor$ $r$-cliques.  Let $\mathcal{F}$ be the family of cliques in this partition.   Now let $\tilde{B_0},B_1,\ldots, B_{{r\choose 2}-1}$ be any partition of the edges 
of these cliques such that each $B_i$ contains \textit{exactly one} edge from each clique in $\mathcal{F}$.  Now set $B_0$ to be the union of $\tilde{B_0}$ along with all
edges of $K_n$ not already partitioned into a $B_i$ (i.e., edges connecting the different cliques in $\mathcal{F}$ as well as the leftover edges from vertices not put into cliques).  See Figure \ref{larget figure} for an example of this partition.
In this section, rather than using the orthogonal character functions, it will be more convenient to use indicator vectors $x_e\in \{0,1\}$
instead.  Additionally for a set of edges $S$, we will use $x^S$ to denote the monomial $\prod_{e\in S} x_e$.

Let $X\in \{0,1\}^{B_0}$ and $Y_i^0,~Y_i^1\in \{0,1\}^{B_i}$ independent as in section \ref{decoupling section}.
As before, for a given setting of $\Y\in \{0,1\}^\B$  we define $g(X)$ by setting
\begin{align*}
g(X):=\alpha(\k)_\Y(X)=\alpha(\k)(X,\Y)
\end{align*}
Recall that $\alpha$ is a linear operator, and that furthermore we have $\alpha(x^S)=0$ unless $S$ is a rainbow set of edges.  However, by construction we know that the only rainbow sets $S$ are exactly the cliques $S\in \mathcal{F}$.  Therefore we have
$$\alpha(\k)=\sum_{S\equiv K_r} \alpha(x^S)=\sum_{S\in \mathcal{F}} \alpha(x^S)$$

For each $S\in \mathcal{F}$, we have $S=e\cup S'$ where $e\in B_0$ and $S'\subset \cup_{i\ge 1} B_i$.  So, for any fixed $S\in \mathcal{F}$, if we sample $\Y$ at random, then we have that $Y_{e'}^0=1$ and $Y_{e'}^1=0$ for all edges $e'\in S'$ with probability at least $\lambda^{2{r\choose 2}^2}$.\footnote{recall $\lambda=\min(p,1-p)$}  Label this event $A_S$.  If $A_S$ occurs then it follows that
$$\alpha(x^S)(X,Y)=x_e\sum_{v\in \{0,1\}^k} (-1)^{|v|} x^{S'}=x_e$$
as the only nonzero term in the above sum is when $v=(0,0,\ldots,0)$.
Given that $A_S$ occurs and $t\le \pi \sigma$ by Lemma \ref{bernoulli} we have
$$\E_{x_e} e^{itx^S/\sigma}=\E_{x_e} e^{itx_e/\sigma}\le 1-\frac{8p(1-p)t^2}{\pi^2\sigma^2}$$

Let $z(\Y)$ denote the number of edges $e\in \tilde B_0$ such that $A_e$ occurs.  Using the fact that $g=\sum_{\mathcal{F}} \alpha(x^S)$ and that 
the random variables $\alpha(x^S)$ are independent, we may compute
$$|\E[e^{itg}]|=\prod_{S\in A}\left|\E[e^{it\alpha(x^S)}]\right|\le \prod_{\substack{S\in A\\A_S\mbox{\scriptsize{ occurs}}}} \left(1-\frac{8p(1-p)t^2}{\pi^2\sigma^2}
\right)=\exp\left(-\frac{8p(1-p)t^2z(Y)}{\pi^2\sigma^2}\right)$$
Since each of the events $A_S$ are independent and occur with probability $\ge\lambda^{2{r\choose 2}^2}$ it follows from Chernoff bounds that $z(Y)\ge \lambda^{2{r\choose 2}^2}n/2r$ with probability  $\ge 1-\exp(-\Omega(n))$. 

So we find that
$$\E_\Y \left|\E_X[e^{it\alpha(\k)}]\right|\le \Pr(A) \exp\left(-\frac{8p(1-p)t^2}{\pi^2\sigma^2}\cdot\frac{\lambda^{2{r\choose 2}^2}n}{2r}\right)+\Pr(A^c)=\exp\left(-\Omega\left(\frac{t^2n}{\sigma^2}\right)\right)$$

Combining this with Lemma \ref{decoupling} we have proved the following:

\begin{lem}\label{highchfbound}
For $|t|\le \pi \sigma$ we have that $|\varphi_{\k}(t)|\le \exp(-\Omega(t^2n/\sigma^2))$.
\end{lem}

\begin{figure}[H]\label{larget figure}
\begin{centering}
\begin{tikzpicture}
\foreach \i in {1,2}{
	\foreach \j in {1,2,3}{
		\node[draw, circle, minimum size = .75cm, inner sep = 0cm, xshift = 4*\i cm]  (v\i\j) at ({(\j-1)*120-30}:1cm){$v_{\i\j}$};
	}
}	
\foreach \i in {1,2}{
	\draw[loosely dashed, ultra thick] (v\i1)--(v\i2);		
	\draw[dotted, ultra thick] (v\i3)--(v\i2);	
	\draw[ thin] (v\i1)--(v\i3);									
}
%
%
%
%
%
%
%
%
%

\foreach \j in {1,2}{
	\foreach \k in {2,3}{
		\draw[ thin] (v1\j) to (v2\k);
	}
	
	\draw[ thin](v13) to (v22);
	\draw[ thin, bend right = 30](v13) to (v23);
	\draw[ thin, bend right = 40](v13) to (v21);
	\draw[ thin, bend right = 20](v11) to (v21);
}

\end{tikzpicture}
\caption{Example illustrating the partition $B_0,B_1,\ldots, B_{r\choose 2-1}$ from section \ref{larget section}.  In this case (where $r=3$) we have
3 line styles (loosely dahsed, dotted, and thin) representing the 3 different edge classes.  Note that the only rainbow triangles are $(v_{11},v_{12},v_{13})$ and $(v_{21},v_{22},v_{23})$.
$B_0$, here represented by the thin edges is quite large, but most of the edges are not on even a single rainbow triangle and so $g(X)$ does
not depend on them at all}
\end{centering}
\end{figure}

\bibliography{TryNBib}
\bibliographystyle{alpha}

\appendix
\section{Proof of Lemma \ref{alpha product}} \label{alpha product appendix}
\begin{replem}{alpha product}
 Let $S,T\subset {[n]\choose 2}$.  For $0\le i\le k$ let $S_i=S\cap B_i$ and $T_i=T\cap B_i$.
$$\alpha(\chi_S)\alpha(\chi_T)=\left(\prod_{i=1}^k\left[ \sum_{\iota =0}^1-\chi_{S_i^\iota}\chi_{T_i^{\iota\oplus1}}+\chi_{S_i^\iota\Delta T_i^\iota}\left(\sum_{U_i\subset S_i^\iota\cap T_i^\iota} \gamma^{|U_i|}\chi_{U_i}\right) \right]\right)$$
In particular, for $U\subset B_0\cup \B$ we have
\begin{align*}
\left|\widehat{\alpha(\chi_{S})\alpha(\chi_T)}(U)\right|\le \begin{cases}
0&\mbox{if }S\Delta T\not \subset \fl(U)\mbox{ or } \fl(U)\not \subset S\cup T\\
\max\left(\gamma^{|U|}, 1\right)&\mbox{if } S\Delta T\subset \fl(U)\subset S\cup T
\end{cases}
\end{align*}
\end{replem}

\begin{proof}

For sets $S,T\subset \B$ we compute
\begin{align*}
\alpha(\chi_{S})\alpha(\chi_T)&=\left(\prod_{i=1}^k\left[\chi_{S_i}(Y_i^{0})-\chi_{S_i}(Y_i^{1})\right]\right)\left(\prod_{i=1}^k\left[\chi_{T_i}(Y_i^{0})-\chi_{T_i}(Y_i^{1})\right]\right)\\
&=\left(\prod_{i=1}^k\left[- \chi_{S_i}(Y_i^0)\chi_{T_i}(Y_i^1)-\chi_{S_i}(Y_i^1)\chi_{T_i}(Y_i^0)+\chi_{S_i}(Y_i^0)\chi_{T_i}(Y_i^0)+\chi_{S_i}(Y_i^1)\chi_{T_i}(Y_i^1) \right]\right)\\
&=\left(\prod_{i=1}^k\left[ \sum_{\iota =0}^1-\chi_{S_i}(Y_i^\iota)\chi_{T_i}(Y_i^{\iota\oplus1})+\chi_{S_i}(Y_i^\iota)\chi_{T_i}(Y_i^\iota) \right]\right)\\
&=\left(\prod_{i=1}^k\left[ \sum_{\iota =0}^1-\chi_{S_i}(Y_i^\iota)\chi_{T_i}(Y_i^{\iota\oplus1})+\chi_{S_i\Delta T_i}(Y_i^\iota)\prod_{e\in {S\cap T}}\left(1+\frac{1-2p}{\sqrt{p(1-p)}}\chi_e(Y_i^\iota)\right) \right]\right)\\
&=\left(\prod_{i=1}^k\left[ \sum_{\iota =0}^1-\chi_{S_i}(Y_i^\iota)\chi_{T_i}(Y_i^{\iota\oplus1})+\chi_{S_i\Delta T_i}(Y_i^\iota)\left(\sum_{U_i\subset S_i\cap T_i} \gamma^{|U_i|}\chi_{U_i}(Y_i^\iota)\right) \right]\right)\\
&=\left(\prod_{i=1}^k\left[ \sum_{\iota =0}^1-\chi_{S_i^\iota}\chi_{T_i^{\iota\oplus1}}+\chi_{S_i^\iota\Delta T_i^\iota}\left(\sum_{U_i\subset S_i^\iota\cap T_i^\iota} \gamma^{|U_i|}\chi_{U_i}\right) \right]\right)
\end{align*}

So we see that this is supported only on sets $U$ such that for each $i\in [k]$ we have that $U_i=S_i^\iota\cup T_i^{\iota\oplus 1}$ or $S_i\Delta T_i\subset U_i$.

In both of these cases $S_i\Delta T_i\subset\fl(U_i)\subset S_i\cup T_i$.  
Furthermore each of the terms appearing in the expansion of the product in the RHS of the above are unique, and each has coefficients of bounded size.  In particular for any set $U$ we find that

\begin{align*}
\left|\widehat{\alpha(\chi_{S})\alpha(\chi_T)}(V)\right|\le \begin{cases}
0&\mbox{if }S\Delta T\not \subset \fl(V)\\
\max\left(\gamma^{|V|}, 1\right)&\mbox{if } S\Delta T\subset \fl(V)
\end{cases}
\end{align*}
\end{proof}

\section{Proof of Lemma \ref{GeBound}}\label{appendix z}
In this section we prove Lemma \ref{GeBound} from Section \ref{GeCor Section}.\footnote{All terminology and parameters should be set as in that section, and are not repeated here.}

\begin{replem}{GeBound}
Let $Z=\sum_{e\in B_0} G_e^2$.  Assume that $u\ge n^{2\tau}$  and $1\le k\le r-2$.  Then there exists a constant $C_0$ such that for sufficiently large $n$, $\Pr\left[|Z|\le C_0\left(\frac{u}{n}\right)^kn^{-k}\right]\le \exp\left(\Omega(n^{-\tau/r^2})\right)$
\end{replem}

To build to this lemma we first analyze the transform of each summand $G_e^2$ individually.  $\hat{G_e}$ is supported on sets $S\in \mathcal{F}_e$, that is, sets $S$ such that $\supp(S)-\supp(e)$ consists of 1 vertex from each color class $U_1,U_2,\ldots, U_k$. 
For sets $S,T\in \mathcal{F}_e$  Lemma \ref{alpha product} tells us that
\begin{align*}
\left|\widehat{\alpha(\chi_{S})\alpha(\chi_T)}(V)\right|\le \begin{cases}
0&\mbox{if }S\Delta T\not \subset \fl(V)\mbox{ or } \fl(V)\not \subset S\cup T\\
C&\mbox{if } S\Delta T\subset \fl(V)\subset S\cup T
\end{cases}
\end{align*}
where $C$ is some constant depending only on $r$ and $p$.

Therefore, for $V\subset \B$ we can bound the Fourier coefficients of $G_e^2$ by
\begin{align}\label{GeCoeffBound}
\widehat{G_e^2}(V)=\sum_{S,T\in \mathcal{F}_e}\hat{g_e}(S)\hat{g_e}(T)\widehat{\alpha(\chi_S)\alpha(\chi_T)}(V)\le \sum_{\substack{S,T\in \mathcal{F}_e\\S\Delta T\subset\fl(V)\subset S\cup T}}C^2 \hat{\k}(S\cup e)\hat{\k}(T\cup e)
\end{align}
For $S\in \mathcal{F}_e$, we know that  $\hat{K}(S\cup e)=\Theta(n^{-|\supp(S\cup e)|+1})=\Theta(n^{-k-1})$.  So the above sum can be reduced to a counting problem.  For any given
set $V\subset \B$ we need to count the number of pairs $S.T\in \mathcal{F}_e$ such that $S\Delta T\subset \fl(V)\subset S\cup T$.
First, to help with this counting problem we define the auxiliary color function to be
$c(S)=\{i \st \supp(S)\cap U_i\neq \varnothing~\mbox{and }1\le i\le k$\}.  That is $c(S)$ is the number of the vertex partitions $U_1,U_2,\ldots, U_k$
that $S$ sees.  A few helpful observations:  
\begin{itemize}
\item Touching a vertex in $U_0$ is not counted in $c(V)$
\item For any $S\in \mathcal{F}_e$ we have $c(S)=k$
\item The above bullet is not true for all sets in the spectrum of $G_e^2$, as for example the empty set has a nontrivial coefficient of $\widehat{G_e^2}(\varnothing)=
\E[G_e^2]=\Theta((u/n)^kn^{-k-2})$
\end{itemize}

We solve this counting problem with the following Lemma
\begin{lem}\label{CountingLemma}
Fix some $V\subset \B$ and let $\A=\{(S,T)\subset \mathcal{F}_e^2\st S\Delta T\subset\fl(V)\subset S\cup T\}$.  If $\supp(V)\cap U_0\not \subset \supp(e)$, then $|\A|=0$.  Otherwise, $|\A|\le O(u^{k-c(V)})$.
\end{lem}
\begin{proof}
The first claim follows from just noting that $S,T\in \mathcal{F}_e$ implies that $\supp(S)\cap U_0\subset \supp(e)$ and $\supp(T)\cap U_0\subset \supp(e)$.

For the second inequality, we first count the number of ways to choose the support of $S\cup T$.  Since $S\Delta T\subset \fl(V)$, it follows that $\supp(S\Delta T)\subset \supp(\fl(V))$.
However for any set of edges $S,T$ it must be that $\supp(S)\Delta\supp(T)\subset \supp(S\Delta T)$.  Therefore $\supp(S)\Delta \supp(T)\subset \supp(V)$.

Now we establish what $\supp(S)\cap U_i$ and $\supp(T)\cap U_i$ can look like, depending on the properties of $V$.  We do this in three cases:
\begin{enumerate}
\item $|\supp(V)\cap U_i|=1$.  Let $v_i$ be the vertex in the intersection.  Because $\supp(S)\Delta\supp(T)\subset \supp(V)$ and $|\supp(S)\cap U_i|=|\supp(T)\cap U_i|=1$ it follows that $\supp(S)\cap U_i=\supp(T)\cap U_i=v_i$.
\item $|\supp(V)\cap U_i|\ge 2$.  Since $S,T$ are both rainbow, we have that $S\cup T$ is supported on at most 2 vertices of color $i$.  Combining this with the hypothesis that $\supp(V)\subset \supp(S)\cup \supp(T)$ yields that $\supp(V)\cap U_i=\supp(S\cup T)\cap U_i$.
\item  $|\supp(V)\cap U_i|=0$.  The above observation that $\supp(S)\Delta \supp(T)\subset \supp(V)$ tells us that $(\supp(S)\Delta\supp(T))\cap U_i=\varnothing$ and so $\supp(S)\cap U_i=\supp(T)\cap U_i$.  This single point of intersection could be any arbitary point from $U_i$ leaving at most $u$ possible choices.
\end{enumerate}
%

So combining all these cases we find that 
\begin{itemize}
\item If $|\supp(V)\cap U_i|\ge 1$, then $\supp(S\cup T)\cap U_i$ is determined uniquely by $V$
\item If $|\supp(V)\cap U_i|=0$ then we permit that $\supp(S\cup T)\cap U_i$ could be any one vertex from $U_i$
\end{itemize}
Meanwhile $\supp(S\cup T)\cap U_0\subset e$, and so there are at most 4 possible choices for this set.
Combining this over all possible choices, we see that $\supp(S\cup T)$  can take at most $4u^{k-c(v)}$ distinct possible values.
%
Lastly we note that there are at most $\Theta(1)$ possible ways to decide how to form rainbow sets of edges $S,T$ supported on a fixed set of
at most $2k+2$ vertices, so this finishes the proof.
\end{proof}

\begin{lem}\label{CD}
For $j\in \{0,1,2\}$ and $0\le c\le k$ define
$$\C_j:=\{ V\subset \B| \widehat{Z}(V)\neq 0,~c(V)=c,~|\supp(V)\cap U_0|=j\}$$
Then:
\begin{align*}
|\C_0|\le u^{2c-1}\qquad|\C_1|\le nu^{2c}\qquad
|\C_2|\le n^2u^{2c}
\end{align*}

\end{lem}

\begin{proof}
We prove the claims on the size of $\C_1, \C_2$ first.  Any set $V$ such that $\hat{Z}(V)\neq 0$ has the property that for some $S,T\in \mathcal{F}_e$ we have $\fl(V)\subset S\cup T$.
Since any $S,T$ each contain exactly one vertex in each of the $U_i$, it follows that $S\cup T$ is supported on at most 2 vertices in each $U_i$.  Furthermore, for $V\in \C_j$ there are at most $n^j$ possible choices for $\supp(V)\cap U_0$.  Therefore
 there are at most $u^{2c(V)}n^j$ ways to choose the vertex support of $V$.  Once the vertices are chosen, there are only $O(1)$ subsets of edges from
$\B$ supported on any vertex set of size at most $2k$.  This shows our bound on $|\C_1|$ and $|\C_2|$.

For the claim about $|\C_0|$, note that if $V\in \C_0$, then as above there are some $S,T\in \mathcal{F}_e$ such that $S\Delta T\subset \fl(V)$.  Now let $i$ be the smallest index such that $V$ is supported on a vertex in $U_i$. 
$S$ has an edge of the form $(a,b)$ where $a\in U_i$ and $b\in U_j$ for some $j<i$ by the rainbow condition for membership in $\mathcal{F}_e$.
It follows that $b\notin \supp(V)$, and hence $(a,b)\notin \fl(V)$.
Hence it must be the case that $(a,b)\in T$ as well.  So $S\cap U_i=T\cap U_i=a$.  Therefore $\supp(V)$ contains at most 1 vertex from $U_i$.  Continuing
to count the number of  possible choices of the support of $V$ as we did above we find that
$|\C_0|=O(u^{2c-1})$.
\end{proof}

From these two lemmas we can obtain our desired concentration bound for $\sum_{e} G_e^2$.
\begin{replem}{GeBound}
Let $Z=\sum_{e\in B_0} G_e^2$.  Assume that $u\ge n^{2\tau}$  and $1\le k\le r-2$.  Then there exists a constant $C$ such that for sufficiently large $n$, $\Pr\left[|Z|\le C\left(\frac{u}{n}\right)^kn^{-k}\right]\le \exp\left(\Omega(n^{-\tau/r^2})\right)$
\end{replem}
\begin{proof}

First we use Lemma \ref{CountingLemma} to compute $\hat{Z}(V)$ in terms of $k$ and $c(V)$.  We do this in three pieces.  For $i\in \{0,1,2\}$ let
$\mathcal{F}_0$ be the family of nonempty subsets $V\subset\B$ such that $|\supp(V)\cap U_0|=i$.

For any $V\subset \B$ and $e\in {U_0\choose 2}$ note that $\widehat{G_e^2}(V)=0$ unless $\supp(V)\cap U_0\subset e$.  Therefore if $V\in \mathcal{F}_i$, then there are at most $n^{2-i}$ choices of $e\in {U_0\choose 2}$ such that $\widehat{G_e^2}(V)\neq 0$.  Combining this observation with Equation \ref{GeCoeffBound} and Lemma \ref{CountingLemma} for $V\in \mathcal{F}_i$ we have
\begin{align*}
\hat{Z}(V)=\sum_{e\in B_0}\widehat{G_e^2}(V)\le\O\left(n^{2-i}n^{-2k-2}u^{k-c(V)}\right)=O\left(\left(\frac{u}{n}\right)^{k-c(V)}n^{-k-c(V)-i}\right)
\end{align*}

%
%
%

Now we are in a position to compute $Var(Z)$.  We break the sets up into three parts as per the above calculation.  First
for sets $V\in \mathcal{F}_0$ we use Lemma \ref{CD} to say
\begin{align*}
\sum_{V\in \mathcal{F}_0}\hat{Z}(V)^2&=\sum_{c=1}^k\sum_{\substack{V\in \mathcal{F}_0\\c(V)=c}} \hat{Z}(V)^2
=\sum_{c=1}^k\sum_{\substack{V\in \mathcal{F}_0\\c(V)=c}} O\left(\left(\frac{u}{n}\right)^{2k-2c}n^{-2k-2c}\right)\\
&=\sum_{c=1}^k O\left(\left(\frac{u}{n}\right)^{2k-2c}n^{-2k-2c}u^{2c-1}\right)=O\left(\left(\frac{u}{n}\right)^{2k-1}n^{-2k-1}\right)
\end{align*}

Next for $\mathcal{F}_i$ where $i=1,2$ we find
\begin{align*}
\sum_{V\in \mathcal{F}_1}\hat{Z}(V)^2&=\sum_{c=1}^k\sum_{\substack{V\in \mathcal{F}_1\\c(V)=c}} \hat{Z}(V)^2
=\sum_{c=1}^k\sum_{\substack{V\in \mathcal{F}_1\\c(V)=c}} O\left(\left(\frac{u}{n}\right)^{2k-2c}n^{-2k-2c-2i}\right)\\
&=\sum_{c=1}^k O\left(\left(\frac{u}{n}\right)^{2k-2c}n^{-2k-2c-2i}u^{2c}n^{i}\right)=O\left(\left(\frac{u}{n}\right)^{2k}n^{-2k-i}\right)
\end{align*}
%
%
Combining these two bounds we find
$$Var(Z)=\sum_{\substack{V\subset \B\\V\neq \varnothing}} \hat{Z}(V)^2=\sum_{V\in \mathcal{F}_0} \hat{Z}(V)^2+\sum_{V\in \mathcal{F}_1} \hat{Z}(V)^2+\sum_{V\in \mathcal{F}_2} \hat{Z}(V)^2= O\left(\left(\frac{u}{n}\right)^{2k-1}n^{-2k-1}\right)$$

We know  from Lemma \ref{GHZ} that $\E[Z]=\Theta(u^kn^{-2k})$, so the event that $|Z|\le \E[Z]/2$ implies that 

$$\frac{|Z-\E[Z]|}{\sqrt{Var(Z)}}\ge \frac{\E[Z]}{2\sqrt{Var(Z)}}=\Omega\left(\frac{u^kn^{-2k}}{u^{k-\frac12}n^{-2k}}\right)=\Omega\left(u^{\frac12}\right)=\Omega\left(n^{\tau}\right)$$
Since $Z$ is a polynomial of degree at most $r(r-1)$, a standard application of Lemma \ref{concentration hypercontractivity} confirms
that $\Pr\left[|Z|\le \E[Z]/2\right]\le \exp\left(-\Omega(n^{2\tau/r^2})\right)$, and this is what we needed to show.
\end{proof}

\end{document}